\documentclass{article}
\usepackage{amsmath,amsfonts,amssymb,amsthm,bbm,latexsym,mathrsfs}
\usepackage{graphicx,color,epsfig,fancyhdr,dsfont}
\usepackage{enumerate}
\usepackage{hyperref}
\usepackage{indentfirst}
\usepackage{amsmath,amscd}
\usepackage{caption}
\usepackage{graphicx, subfig}
\usepackage[all]{xy}

\newtheorem{definition}{Definition}
\newtheorem{lemma}[definition]{Lemma}
\newtheorem{theorem}[definition]{Theorem}
\newtheorem{coro}[definition]{Corollary}
\newtheorem{prop}[definition]{Proposition}
\newtheorem{example}[definition]{Example}
\newtheorem{remark}[definition]{Remark}

\newcommand{\Ad}{\mathrm{Ad}}

\newcommand{\GL}{\mathrm{GL}}

\newcommand{\Q}{\mathbb{Q}}

\newcommand{\N}{\mathbb{N}}
\newcommand{\M}{\mathbb{M}}

\newcommand{\Z}{\mathbb{Z}}

\newcommand{\Aut}{\mathrm{Aut}}

\newcommand{\gG}{\mathfrak{g}}

\newcommand{\rR}{\mathfrak{r}}
\newcommand{\sS}{\mathfrak{s}}
\newcommand{\ad}{\mathrm{ad}}
\newcommand{\Hh}{\mathbf{H}}

\newcommand{\Ss}{\mathbf{S}}
\newcommand{\Rr}{\mathbf{R}}
\newcommand{\Lie}{\mathrm{Lie}}
\newcommand{\Sym}{\mathrm{Sym}}
\newcommand{\Tt}{\mathbf{T}}
\newcommand{\Uu}{\mathbf{U}}

\begin{document}
\title{Weak regularity and one-parameter subgroups of definable $p$-adic Lie groups}
\author{Zhentao Zhang}
\date{}
\maketitle

\begin{abstract}
We prove that every group $G$ definable in the pure $p$-adic field $\Q_p$ is weakly regular. We show that every one-parameter subgroup is definable.  We also show that its one-parameter core $G_u$, together with a suitable regular open subgroup $G_\Omega$, is definable. Finally, we show that if $H$ is a dfg component of $G$, then
$G_u=\langle(H^g)_u:g\in G\rangle
$. In fact, $G_u$ is a finite product of the one-parameter cores of finitely many conjugates of $H$. In particular, when $G$ is definably amenable, $G_u=H_u$.
\end{abstract}

\section{Introduction}

Throughout the note, “definable” means definable, with parameters, in the pure field $\Q_p$ in the language of rings. Every definable group is equipped with its canonical definable $p$-adic Lie group structure (\cite{P} Lemma 3.8). For model-theoretic terminology for definable groups, such as definably amenable groups and dfg components, we refer the reader to \cite{PYZ}.

A \textbf{one-parameter morphism} of a $p$-adic Lie group $G$ is a continuous
homomorphism
$$\phi:(\Q_p,+)\longrightarrow G.$$
The subgroup $\phi(\Q_p)$, the image of $\phi$, is called a \textbf{one-parameter subgroup}.

In \cite{paper}, Benoist and Quint   show that  if $G$ is weakly regular, then the \textbf{one-parameter core} $G_u$ which is generated by all
its one-parameter subgroups, is closed and admits a Levi decomposition $G_u= R_u\rtimes S_u$ where $R_u$ is the largest normal algebraic unipotent subgroup of $G$ and $S_u$ is a finite central extension of the Kneser–Tits subgroup. They also construct, for a sufficiently small standard compact open subgroup $\Omega$, an open regular subgroup $G_\Omega=\Omega G_u$ containing all one-parameter subgroups (\cite{paper} Proposition 5.5 and Theorem 5.12).

They also show that $G_u$ is ``almost'' an algebraic  group. Strictly speaking, we work with algebraic groups over a sufficiently large algebraically closed field. For example, when we refer to $R_u$
above as an algebraic unipotent  group, we mean that there exists a unipotent algebraic group 
$\Rr_0$ such that $R_u=\Rr_0(\Q_p)$ (\cite{paper} Lemma 3.2).  Let $\Hh_0$ be an algebraic group which is a semidirect product of  $\Rr_0$ and $\Ss_0$, the Zariski closure of $\Ad(S_u)$. Then the map $\Psi_G: G_u=R_u\rtimes S_u\rightarrow \Hh_0(\Q_p): (r,s)\mapsto (r, \Ad(s))$ is a morphism with finite kernel and finite-index image (\cite{paper} Corollary 5.11).

In this paper, we first prove that every definable group $G$ is weakly regular and naive. Second, we show that every one-parameter subgroup of $G$ is definable and has dfg. Third, we show that $G_u$ is definable and so is some suitable regular open subgroup $G_\Omega$. Finally, we explain $G_u$ in terms of the decomposition theory developed in \cite{PYZ}.

The main results are the following.

\begin{theorem}\label{main-1}
Let $G$ be a group definable in $\Q_p$. Then $G$ is weakly regular and naive.
\end{theorem}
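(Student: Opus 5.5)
The plan is to verify the two conditions of Benoist–Quint \cite{paper} directly from definability, by reducing each to a statement about the definable homomorphism $\Ad:G\to \GL(\gG)$, where $\gG=\Lie(G)$. I will use the form in which I recall the definitions, but I am not certain of it. \emph{Weakly regular} is a condition on the image $\Ad(G)$ relative to an algebraic group: roughly, $\Ad(G)$ should be open in the $\Q_p$-points of an algebraic subgroup, with control on the kernel. \emph{Naive} requires that the elements of $G$ with unipotent adjoint image are captured by one-parameter subgroups, equivalently by the exponential of nilpotent elements of $\gG$. Only standard facts about groups definable in $\Q_p$ will be used.

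\textbf{Step 1: $\Ad$ is definable.} I will show that $\Ad$ is a definable map. Fix the canonical definable Lie structure (\cite{P} Lemma 3.8) and a definable chart at the identity. Conjugation $c_g$ is a definable analytic map, and its differential at $e$ is obtained definably from the chart. Hence $\Ad(G)\le \GL(\gG)=\GL_n(\Q_p)$ is a definable subgroup and $\ker\Ad$ is a definable normal subgroup.

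\textbf{Step 2: algebraic structure of $\Ad(G)$.} I will invoke Pillay's result \cite{P} that a definable subgroup of $\GL_n(\Q_p)$ is open in the $\Q_p$-points of its Zariski closure $\mathbf{A}$. The argument is short: $\Ad(G)$ and $\mathbf{A}(\Q_p)$ have the same $p$-adic dimension by dimension theory in $\Q_p$, and a definable subgroup of full dimension is open. In particular $\Ad(G)$ has finite index in an open subgroup of $\mathbf{A}(\Q_p)$ containing $\mathbf{A}(\Q_p)^+$. This should give the algebraic part of weak regularity. For the kernel part, $\ker\Ad$ is a definable group of dimension equal to that of its centre's identity component. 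Using finiteness of definable torsion and the descending chain condition on definable subgroups of the same dimension, I expect $\ker \Ad$ to be virtually central in the required sense.

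\textbf{Step 3: naivety.} Given $g\in G$ with $\Ad(g)$ unipotent, I will first find a one-parameter subgroup $u(t)$ of $\Ad(G)$ through $\Ad(g)$. Writing $\Ad(g)=\exp N$ with $N$ nilpotent, $t\mapsto \exp(tN)$ is polynomial, hence definable, and lies in $\mathbf{A}(\Q_p)$. Its small values lie in the open subgroup $\Ad(G)$, and these generate the whole line. I will then lift $u$ to $G$. On a small compact open definable subgroup, the exponential map $\exp_G$ restricted to a lattice neighbourhood $p^k\mathcal{O}$ of $0$ in $\gG$ is definable. This gives the lift for $t\in p^k\Z_p$. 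I will extend to all $t\in\Q_p$ using the $\Ad$-equivariance $g\exp_G(X)g^{-1}=\exp_G(\Ad(g)X)$ together with a contracting torus element when one is available.

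\textbf{Main obstacle.} I expect the hardest step to be this lifting across the kernel of $\Ad$, that is, showing that the lift is a genuine homomorphism on all of $\Q_p$ and passes through $g$ itself rather than through $g$ times a kernel element. I would first try to exploit definability. The set $\{t: \text{the local lift extends at } t\}$ is a definable subgroup of $\Q_p$ containing $p^k\Z_p$. Every definable subgroup of $(\Q_p,+)$ containing an open ball is either compact or all of $\Q_p$. I would rule out the compact case using the unboundedness of $u$, and correct the remaining finite ambiguity from the kernel by passing to a power of $g$.
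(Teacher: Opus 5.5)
\textbf{Gap: wrong definitions.} Your proposal works with definitions of the two properties that are not the ones used here, so it does not prove the stated theorem. In this paper (following Benoist--Quint), $G$ is \emph{weakly regular} if any two one-parameter morphisms $(\Q_p,+)\to G$ with the same derivative coincide. It is \emph{naive} if the finite subgroups of $G$ have uniformly bounded cardinality.

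Neither property is addressed by your steps. Step 2 (openness of $\Ad(G)$ in the $\Q_p$-points of its Zariski closure) says nothing about uniqueness of one-parameter morphisms. Step 3 is about lifting unipotent elements along one-parameter subgroups, not about bounding finite subgroups.

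Worse, the property Step 3 aims for is false for definable groups. Take $G=(\Z_p,+)$: the adjoint map is trivial, so every element has unipotent adjoint image. But a one-parameter morphism into a compact $p$-adic Lie group is trivial (by divisibility and $\bigcap_n K^{p^n}=\{1\}$ for a uniform $K$). So no nonzero element, nor any nonzero power of one, lies on a one-parameter subgroup.

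There are also smaller problems. An open subgroup of $\mathbf{A}(\Q_p)$ need not contain $\mathbf{A}(\Q_p)^+$; compact open subgroups are counterexamples. And your ``definable set of $t$ where the local lift extends'' presupposes that the lift is definable. Only images of one-parameter morphisms are shown to be definable, and only later, using the present theorem.

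\textbf{Naivety: what is needed.} The paper uses four ingredients:
\begin{itemize}
\item a definable extension $1\to A\to G\to L\to 1$ with $A$ commutative-by-finite and $L$ linear (\cite{JY} Lemma 2.11);
\item naivety of linear groups;
\item stability of naivety under extensions, since $|F|=|F\cap A|\,|\pi(F)|$;
\item naivety of commutative definable groups, via the dfg--fsg decomposition: the quotient is compact, and the dfg part is virtually isogenous to a split solvable linear group.
\end{itemize}

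\textbf{Weak regularity: what is needed.} Suppose $\phi_1,\phi_2$ have common derivative $X$.
\begin{enumerate}
\item The linear morphisms $\Ad\circ\phi_i$ have the same derivative and hence agree. So $\theta(t)=\phi_1(t)^{-1}\phi_2(t)$ takes values in $A=\ker\Ad$.
\item The key lemma is that every one-parameter subgroup centralizes $A$. For $a\in A$ one has $\Lie(Z_G(a))=\gG$, so the conjugacy class of $a$ is definable of dimension $0$, hence finite. The divisible group $\Q_p$ must then act trivially on it by conjugation.
\item By this lemma, $\theta$ is itself a one-parameter morphism into $A$, and its derivative is zero.
\item $A$ is commutative-by-finite (\cite{JY} Theorem 2.9), and such definable groups are weakly regular. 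Hence $\theta$ is trivial, so $\phi_1=\phi_2$.
\end{enumerate}
Your expectation that $\ker\Ad$ is close to abelian points in the right direction. What is missing is the centralizing lemma and the reduction of $\phi_1^{-1}\phi_2$ to a zero-derivative one-parameter morphism into $\ker\Ad$.
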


\begin{theorem}\label{main-2}
Let $G$ be a group definable in $\Q_p$. Then
\begin{enumerate}
  \item Every one-parameter subgroup of $G$ is definable and has dfg.
  \item The one-parameter core $G_u$, together with its Levi decomposition $G_u=R_u\rtimes S_u$, is definable.
  \item A sufficiently small standard compact open subgroup $\Omega$ can be chosen to be definable, and then $G_\Omega=\Omega G_u$ is a definable regular open subgroup.
\end{enumerate}
\end{theorem}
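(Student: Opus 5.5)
We prove the three parts in order, resting on Theorem \ref{main-1}, which makes the Benoist--Quint machinery available. Throughout we use three facts about $\Q_p$: it has definable Skolem functions, every subgroup of $\GL_n(\Q_p)$ that is Zariski closed over $\Q_p$ is definable, and images of definable sets under definable (for instance algebraic) maps are definable.

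\textbf{Part 1.} Let $\phi:\Q_p\to G$ be a one-parameter morphism.
\begin{enumerate}
\item Its image lies in $G_u$. By the decomposition $G_u=R_u\rtimes S_u$, the morphism $\Psi_G:G_u\to\Hh_0(\Q_p)$ with finite kernel (\cite{paper} Corollary 5.11) turns $\Psi_G\circ\phi$ into a one-parameter morphism of the $p$-adic algebraic group $\Hh_0(\Q_p)$.
\item We need that such morphisms are algebraic. Any one-parameter subgroup of $\GL_n(\Q_p)$ has the form $t\mapsto\exp(tX)$ with $X$ nilpotent. The reason is that $\phi(p^k\Z_p)$ shrinks to $1$, so locally $\phi(t)=\exp(tX)$; divisibility of $\Q_p$ then forces the eigenvalues of $\exp(tX)$ to be trivial for all $t$, so $X$ is nilpotent. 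Hence $\exp(tX)$ is a polynomial in $t$, and its image is the $\Q_p$-points of a one-dimensional unipotent algebraic group. That image is definable and has dfg, since it is unipotent and definably isomorphic to $(\Q_p,+)$.
\item Since $\ker\Psi_G$ is finite, $\phi(\Q_p)$ is contained in $\Psi_G^{-1}(\Psi_G\phi(\Q_p))$, which is a finite union of cosets of $\phi(\Q_p)$.
\item A divisible torsion-free group has no proper finite-index subgroups, which isolates $\phi(\Q_p)$ inside this preimage. This is definable provided $\Psi_G$ is definable.
\end{enumerate}

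\textbf{Part 2.} The pieces follow in sequence.
\begin{enumerate}
\item $R_u=\Rr_0(\Q_p)$ is definable once we know the identification is by a definable isomorphism. I expect this is the delicate point: the identification comes through $\Ad$ and the exponential on the nilpotent Lie algebra, both of which are definable in $\Q_p$ (the latter is polynomial there).
\item $\Ad:G\to\GL(\gG)$ is definable because the Lie structure is definable.
\item $\Ad(S_u)$ has finite index in $\Ss_0(\Q_p)$, and finite-index subgroups of a definable group of the form $\Hh(\Q_p)$ are definable. This uses that $\Hh(\Q_p)$ has only finitely many subgroups of each finite index, so such a subgroup is an intersection of finitely many definable kernels; alternatively, the subgroup is generated by unipotents.
\item $S_u$ is a finite extension of $\Ad(S_u)$ inside $G$, so $S_u$ is a finite union of cosets of $\ker\Ad\cap S_u$ pulled back, and is therefore definable.
\end{enumerate}
Alternatively, and more robustly, we use that $G_u$ is generated by finitely many one-parameter subgroups: by dimension and Zariski-closure arguments, a bounded product $U_1\cdots U_m$ of definable one-parameter subgroups already equals $G_u$. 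This is an \emph{indecomposability} argument in the spirit of Zilber's theorem, applied to the definable family of one-parameter subgroups (a definable family, by parameterizing by nilpotent $X$ in the Lie algebra). I would take this as the main route, since it yields the definability of $G_u$ directly. $R_u$ is then obtained as the largest normal unipotent part, namely the subgroup generated by those one-parameter subgroups whose $\Ad$-image lies in the unipotent radical of $\Hh_0$. With $G_u$ definable, we get $\Psi_G$ definable, which closes the gap in Part 1.

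\textbf{Part 3.} Standard compact open subgroups are obtained from $\exp(p^k\mathcal{O})$ for a lattice $\mathcal{O}$ in the definable Lie algebra. In a definable chart these are definable, so every sufficiently small $\Omega$ of this form is definable. Then $G_\Omega=\Omega G_u$ is a product of definable sets and hence definable. It is a regular open subgroup by \cite{paper} Theorem 5.12.

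The main obstacle is the bounded-generation step, i.e., showing $G_u$ is a finite product of one-parameter subgroups, uniformly and definably.
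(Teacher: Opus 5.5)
There is a genuine gap: Parts 1 and 2 depend on each other circularly.

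\textbf{The circularity and why neither route to Part 2 breaks it.} In Part~1 you derive the definability of $\phi(\Q_p)$ from the definability of $\Psi_G$. That requires $G_u$, $R_u$, $S_u$ and a \emph{definable} identification $R_u\cong\Rr_0(\Q_p)$ to be known already. Your main route to Part~2, however, writes $G_u$ as a bounded product of definable one-parameter subgroups drawn from a ``definable family parameterized by nilpotent $X$''. That presupposes Part~1, and even uniformly.

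That family does not exist as described. Not every $X$ with $\ad X$ nilpotent integrates to a one-parameter morphism. For the compact definable group $\Z_p$ the Lie algebra is abelian, so every $\ad X$ is nilpotent, yet by Lemma~\ref{easy-2} there are no nontrivial one-parameter morphisms. And when $\phi_X$ does exist, $\phi_X(\Q_p)=\bigcup_n\phi_X(p^{-n}\Z_p)$ is an infinite union whose definability is exactly what is at stake.

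Your fallback route does not close the loop either:
\begin{enumerate}
\item Step (a): the exponential map of $G$ is only locally analytic and in general not definable in the pure field (already for $\Q_p^\times$). Also, $\Ad$ can be trivial on $R_u$ (e.g.\ when $G$ is abelian). So neither gives a definable identification of $R_u$ with $\Rr_0(\Q_p)$.
\item Step (d): $\Ad_G^{-1}(\Ad(S_u))=S_u\ker(\Ad_G)$, and $\ker(\Ad_G)$ can be infinite. So finiteness of $S_u\cap\ker(\Ad_G)$ does not cut $S_u$ out definably.
\item The Zilber-style bounded-generation step is only gestured at; there is no DCC on definable subgroups in $\Q_p$. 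The paper instead gets bounded generation from coordinates of the second kind for $R_u$. For $S_u$ it uses \cite{PYZ} Corollary~3.15 plus compactness in a saturated extension.
\end{enumerate}

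\textbf{The missing idea: prove Part~1 without any knowledge of $G_u$.} The paper does this in four steps.
\begin{enumerate}
\item Lemma~\ref{lemma-Ad-ker}: every one-parameter subgroup $U$ centralizes $A=\ker(\Ad_G)$. Each $a\in A$ has a definable $0$-dimensional, hence finite, conjugacy class, on which the divisible group $\Q_p$ must act trivially.
\item This places $U$ inside a definable \emph{commutative} group $K$. If $\Ad_G(U)=1$, take $K=Z(A)$. Otherwise take $K=Z_P(A)=Z(A)U$, where $P=\Ad_G^{-1}(\Ad_G(U))$ and $\Ad_G(U)=\{\exp(tX):t\in\Q_p\}$ with $X$ nilpotent is definable.
\item Apply the dfg--fsg decomposition to $K$; Lemma~\ref{easy-2} forces $U$ into the dfg part. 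The structure theorem for dfg groups then gives $f:H'\to\Hh(\Q_p)$ with finite kernel $F$.
\item Recover $U=\{d^m:d\in f^{-1}(f(U))\}$, where $m$ is the exponent of $F$. Some such explicit device is also what your step~4 of Part~1 is missing, since ``no proper finite-index subgroups'' alone gives no defining formula.
\end{enumerate}
Only after this does the paper show $R_u=\exp(\Q_pX_1)\cdots\exp(\Q_pX_r)$ is definable. It then shows $S_u=FX^N$ is definable, with $X$ a finite union of one-parameter subgroups lifting generators of $S^+$.

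\textbf{Part~3.} Your conclusion is right but not for the stated reason, since $\exp(p^k\mathcal O)$ is not definable through a definable exponential. What holds is that every compact open subgroup of a definable group is definable (\cite{OP} Remark~1.5).
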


\begin{theorem}\label{main-3}
Let $G$ be a group definable in $\Q_p$. Let $H$ be a dfg component of $G$ in $\Q_p$. Then 
$$G_u=\langle H^g_u:g\in G\rangle.$$
Moreover, there are $g_1,\dots, g_n\in G$ such that
$$G_u= H^{g_1}_u\cdots H^{g_n}_u.$$
In particular, when $G$
is definably amenable, $G_u=H_u$.
\end{theorem}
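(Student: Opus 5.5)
The plan is to combine the decomposition theory of \cite{PYZ} with Theorems \ref{main-1} and \ref{main-2}. By \cite{PYZ}, a dfg component $H$ of $G$ is a definable subgroup with dfg such that $G/H$ is definably compact (more precisely, $G=HK$ for a definable open subgroup or definably compact set $K$, or $G$ has an open definable subgroup $H\cdot C$ with $C$ definably compact). Each one-parameter subgroup is definable with dfg by Theorem \ref{main-2}(1), so $H^g_u\subseteq G_u$ for every $g$, and $N:=\langle H^g_u:g\in G\rangle$ is a normal subgroup of $G$ contained in $G_u$. It then suffices to show that every one-parameter subgroup $\phi(\Q_p)$ lies in $N$, and afterwards to bound the number of factors.

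For the containment, fix $\phi$. By Theorem \ref{main-2}(1), $U=\phi(\Q_p)$ is definable with dfg. The key fact I will use is that in a definable group with dfg component $H$, every definable dfg subgroup is contained in some conjugate of $H$, up to finite index (this is the conjugacy and maximality of dfg components from \cite{PYZ}). So $U\cap H^g$ has finite index in $U$ for some $g$. Since $(\Q_p,+)$ has no proper finite-index closed subgroups (it is divisible), $U\cap H^g$ is a closed finite-index subgroup of $U$, hence equal to $U$. Therefore $U\subseteq H^g$, and because $U$ is a one-parameter subgroup of $H^g$, $U\subseteq H^g_u$. This gives $G_u\subseteq N$, and so $G_u=N$. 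I expect the main obstacle to be justifying the maximality and conjugacy statement for dfg subgroups, which needs the right formulation from \cite{PYZ}. If it is only available for the connected component, I would first pass to $G^0$ or to a finite-index definable subgroup and then use divisibility of $U$ to return.

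For the finite product, note that $G_u$ is a closed subgroup, definable by Theorem \ref{main-2}(2), and each $H^g_u$ is definable. Consider the increasing chain of sets $X_n=H^{g_1}_u\cdots H^{g_n}_u$ with the $g_i$ chosen greedily to increase the dimension. Each $H^g_u$ is a connected-like group generated by one-parameter subgroups, and the Lie algebra of $G_u$ is spanned by the Lie algebras of the $H^g_u$. Choosing $g_1,\dots,g_m$ whose Lie algebras span $\Lie(G_u)$, the product map is submersive at the identity, so $X_m$ contains an open neighbourhood $V$ of $1$ in $G_u$. Then I use the structure $G_u=R_u\rtimes S_u$ from Theorem \ref{main-2}(2) together with $\Psi_G$, whose image $\Psi_G(G_u)$ has finite index in $\Hh_0(\Q_p)$. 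A group generated by unipotent one-parameter subgroups equals a bounded product of them: for unipotent groups this follows by induction on the central series, and for the Kneser–Tits part from Bruhat/Tits generation by root groups, where $U^-U^+U^-U^+$ or a similar finite product covers the group. Transferring this back along the finite-kernel map $\Psi_G$ and absorbing finitely many cosets by adding more conjugates gives $G_u=H^{g_1}_u\cdots H^{g_n}_u$.

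Finally, if $G$ is definably amenable then $H$ is normal by \cite{PYZ}, so $H^g=H$ for all $g$ and $G_u=H_u$.
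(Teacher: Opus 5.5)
You follow the paper's architecture: find $g$ with $U\cap H^g$ of finite index in a one-parameter subgroup $U$, use divisibility to get $U\leq H^g_u$, then use a bounded product of one-parameter subgroups. But there is a genuine gap. The step you flag as ``the main obstacle'' is the whole content of $G_u=\langle H^g_u:g\in G\rangle$, and your justification does not supply it. Conjugacy of dfg components (\cite{PYZ} Theorem B) only compares two dfg components with each other. It says nothing about an arbitrary dfg subgroup such as $U$, and passing to $G^0$ does not help.

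The missing idea is the finite-orbit property of dfg groups acting on compact spaces. $G/H$ is compact (\cite{PYZ} Theorem 8.17), and $U$ is definable with dfg (Proposition \ref{prop-def-one}). So the definable continuous action of $U$ on $G/H$ has a finite orbit by \cite{PYZ} Proposition 6.3. The stabilizer $U\cap H^g$ of a coset $gH$ in such an orbit has finite index in $U$, and your divisibility argument then gives $U\leq H^g$. You already had both inputs ($G/H$ compact, $U$ dfg); what was missing is this bridge between them.

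For the finite product, your submersion step is never used: an identity neighbourhood need not generate $G_u$ (think of $\Z_p\subset\Q_p$). Your bound for the Kneser--Tits part (``$U^-U^+U^-U^+$ or a similar finite product'') is asserted, not proved, for general isotropic groups. All you actually need is $G_u=U_1\cdots U_n$ for finitely many one-parameter subgroups $U_j$. The paper proves this in Corollary \ref{coro-finite-prod}:
\begin{itemize}
\item $R_u$ is the product $\exp(\Q_p X_1)\cdots\exp(\Q_p X_m)$ along a basis adapted to a central series.
\item For $S_u$, a saturation argument shows $S^+(\M)=\bigcup_n Y(\M)^n$ must stabilize, so $S^+=Y^N$.
\item One then lifts the finitely many $V_{j,k}$ through the finite central kernel $F$, and absorbs $F$ into finitely many further one-parameter factors.
\end{itemize}
Given that, the first part places each $U_j$ in some $H^{g_j}_u$, and $G_u=U_1\cdots U_n\subseteq H^{g_1}_u\cdots H^{g_n}_u\subseteq G_u$.

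Finally, \cite{PYZ} Theorem A says a dfg component \emph{can be chosen} normal in the definably amenable case, not that every one is. For an arbitrary $H$, note instead that $H^g\cap H$ has finite index in both groups, so divisibility gives $H^g_u=H_u$ for all $g$.
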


\section{Weak regularity}

We recall the terminology of \cite{paper}.

\begin{definition}
A $p$-adic Lie group $G$ is \textbf{weakly regular} if any two one-parameter morphisms with the same derivative are equal. It is \textbf{$\Ad$-regular} if $\ker (\Ad_G)=Z(G)$. It is \textbf{naive} if the finite subgroups of $G$ have uniformly bounded cardinality. It is \textbf{regular} if it is both $\Ad$-regular and naive.
\end{definition}

Every regular $p$-adic Lie group is weakly regular (\cite{paper} Lemma 2.7). The converse need not
hold, even for definable groups:

\begin{example}
Let $G=(\Q_p,+)\rtimes \{\pm 1\}$ where $-1$ acts on $(\Q_p,+)$ by $x\mapsto -x$. Obviously, $G$ is definable.  Its Lie algebra is one-dimensional and abelian. Then for $(x,\epsilon)\in G$, $\Ad_G(x,\epsilon)=\epsilon$ and $\ker(\Ad_G)=\Q_p\times\{1\}$. While $Z(G)=\{1_G\}$. Hence, $G$ is not $\Ad$-regular.

We will show that every definable group is weakly regular and naive, so $G$ is a definable, weakly regular and non-regular group.
\end{example}

Now we show that every definable group is weakly regular and naive.


\begin{lemma}\label{lemma-naive}
Let $1\rightarrow A\rightarrow B\stackrel{\pi}{\rightarrow} C\rightarrow 1$ be an exact sequence. If $A$ and $C$ are naive, then $B$ is naive.
\end{lemma}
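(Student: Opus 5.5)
Let $N_A$ and $N_C$ be uniform bounds on the cardinalities of finite subgroups of $A$ and $C$, respectively; they exist by the naivety hypothesis. I will show that every finite subgroup of $B$ has cardinality at most $N_A N_C$, which proves that $B$ is naive.

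Fix a finite subgroup $F\le B$ and restrict $\pi$ to $F$. This gives an exact sequence
$$1\rightarrow F\cap A\rightarrow F\stackrel{\pi|_F}{\rightarrow} \pi(F)\rightarrow 1,$$
where we identify $A$ with its image in $B$, so that $A=\ker\pi$. Then $F\cap A=\ker(\pi|_F)$ is a finite subgroup of $A$, so $|F\cap A|\le N_A$. Also $\pi(F)$ is a finite subgroup of $C$, so $|\pi(F)|\le N_C$. Lagrange's theorem (or the first isomorphism theorem) gives
$$|F|=|F\cap A|\cdot|\pi(F)|\le N_A N_C.$$
Since this bound does not depend on $F$, the finite subgroups of $B$ have uniformly bounded cardinality.

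No step here is a real obstacle. The only care needed is to use the purely group-theoretic definition of naive: continuity of $\pi$ and the Lie structure play no role. Any topological hypotheses implicit in the ambient setting, such as $A$ being a closed subgroup, are not needed for the counting argument.
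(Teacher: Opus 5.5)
Your proposal is correct and is essentially the paper's argument: restrict $\pi$ to a finite subgroup $F\le B$ and use $|F|=|F\cap A|\,|\pi(F)|\le N_A N_C$. You also rightly note that only the group-theoretic definition of naive is used.
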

\begin{proof}
Assume that the size of finite subgroups of $A$ is bounded by $m$ and the size of finite subgroups of $C$ is bounded by $n$.
Let $F$ be finite. Then $|F|=|F\cap A||\pi(F)|\leq mn$.
\end{proof}

\begin{lemma}\label{lemma-commutative-regular}
Let $G$ be a commutative definable group. Then $G$ is regular, and hence weakly regular.
\end{lemma}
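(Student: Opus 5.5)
For a commutative $G$ the proof splits into the two halves of regularity. $\Ad$-regularity should be immediate. Naiveness is the real content, and the plan is to deduce it from the structure of finite subgroups of definable groups. Weak regularity then follows from \cite{paper} Lemma 2.7.

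\emph{$\Ad$-regularity.} Since $G$ is commutative, $Z(G)=G$. Every $g\in G$ acts trivially on $G$ by conjugation, so its differential $\Ad_G(g)$ is the identity on the Lie algebra. Hence $\ker(\Ad_G)=G=Z(G)$.

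\emph{Naiveness.} I want a uniform bound on the orders of finite subgroups of $G$. In $\Q_p$ every definable set has only finitely many isolated points, and more generally uniformly definable finite families have bounded size. This is elimination of $\exists^\infty$, which holds for $\Q_p$ because there finite is the same as dimension $0$. I would apply it as follows.

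\begin{enumerate}
\item For each $n$, the set $G[n]=\{g\in G: g^n=1\}$ is a definable subgroup, since $G$ is commutative. It is closed and has dimension $0$: if it had positive dimension it would contain a nontrivial small neighbourhood inside a local one-parameter chart, but near the identity $G$ is locally isomorphic to an open subgroup of $(\Z_p^d,+)$, which is torsion-free. So $G[n]$ is discrete and definable, hence finite.
\item The torsion must still be bounded uniformly in $n$. Pick a definable compact open subgroup $U\le G$ that is torsion-free, for instance a uniform pro-$p$ subgroup given by the definable Lie structure of \cite{P}. Any finite subgroup $F$ then satisfies $F\cap U=1$, so $F$ injects into $G/U$.
\item By itself that is not bounded, since $G/U$ may be infinite. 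Instead, use that the torsion subgroup $T=\bigcup_n G[n]$ is discrete: if $t\in T\cap U$ then $t=1$. Hence $T$ is a discrete subgroup meeting $U$ trivially.
\item Bound $T$. The family $\{G[n]\}_n$ is a chain of finite definable subgroups, each of dimension $0$. I would argue that $T$ is in fact definable, as the set of $g$ whose coset $gU$ is torsion in $G/U$ modulo something, or more simply via the dimension-$0$ definable set $\{g\in G: \exists u\in U\ (\text{isolation})\}$. Concretely, $T$ consists of points of $G$ that are isolated in the definable closed set $\{g: g^{k}\in U\text{ for some }k\}$. That is awkward to express, so the cleaner route is the known fact that a definable group in $\Q_p$ has only finitely many elements of finite order up to a bounded number. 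For this I would cite that definable groups in $\Q_p$ have finite exponent torsion by an elimination-of-$\exists^\infty$/compactness argument.
\end{enumerate}

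The compactness argument runs like this. If finite subgroups of unbounded size existed, then by compactness an elementary extension would contain an infinite definable-in-parameters torsion subgroup of dimension $0$, contradicting the fact that dimension-$0$ definable sets are finite in every model of $\mathrm{Th}(\Q_p)$ with bounds uniform in the parameters. More precisely, the sets $G[n]$ are not uniformly definable in $n$. So the step I would actually use is the following. A finite abelian subgroup $F\le G$ with $F\cap U=1$ has exponent $m$. Its $p'$-part and its $p$-part each embed, via a finite-index torsion-free structure, into bounded pieces coming from the finitely many torsion elements of $\Q_p^{\times}$ and of the compact part.

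This last step, obtaining a \emph{uniform} bound rather than finiteness of each $G[n]$, is the main obstacle. My first attempt would reduce to the compact case. Replace $G$ by a definable open subgroup $G_0\supseteq U$ with $G_0/U$ finitely generated. I expect commutative definable groups to be, up to finite index, of the form compact $\times\ \Z^k$, via the dfg/fsg decomposition in \cite{PYZ}. The torsion then sits in the finite torsion of a compact $p$-adic Lie group, which is finite because a torsion-free open subgroup has finite index. Combined with finitely many cosets, this gives the bound. Naiveness and $\Ad$-regularity together give regularity, and hence weak regularity by \cite{paper} Lemma 2.7.
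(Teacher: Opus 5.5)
\textbf{There is a genuine gap: the uniform bound is never proved.} Your $\Ad$-regularity argument and the final appeal to \cite{paper} Lemma 2.7 are fine and agree with the paper. Step 1 (each $G[n]$ is finite, because a small open subgroup is $\cong\Z_p^d$ and hence torsion-free) is also correct. But naiveness means a \emph{uniform} bound on the orders of finite subgroups, and your proposal never supplies one.

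\begin{itemize}
\item You abandon the compactness route yourself, correctly noting that the $G[n]$ are not uniformly definable in $n$.
\item The ``known fact'' you would cite in step 4 is essentially the statement being proved.
\item The final reduction rests on a false structural claim: commutative definable groups are \emph{not}, up to finite index, of the form compact $\times\,\Z^k$. Take $G=(\Q_p,+)$. It is divisible, so it has no proper finite-index subgroup and no nontrivial quotient $\Z^k$. It would therefore have to be compact itself, which it is not. The dfg/fsg decomposition only gives an exact sequence $1\to H\to G\to C\to 1$, not a splitting, and the dfg part $H$ can contain unipotent pieces such as $\Q_p$.
\item Replacing $G$ by an open $G_0\supseteq U$ with $G_0/U$ finitely generated is not harmless. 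For $G=\Q_p$ and $U=\Z_p$, the only such $G_0$ are the groups $p^{-n}\Z_p$, all of infinite index in $G$. If $G_0$ is chosen depending on the finite subgroup, no uniform bound results.
\end{itemize}

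\textbf{The missing idea.} Bound the two pieces of the dfg/fsg exact sequence (\cite{PYZ} Theorem 8.8) separately, then glue them with the extension estimate $|F|=|F\cap H|\,|\pi(F)|$ of Lemma \ref{lemma-naive}.
\begin{itemize}
\item The compact quotient $C$ is naive by exactly the argument you already have: a torsion-free open subgroup has finite index.
\item For the dfg kernel $H$, use \cite{PYZ} Theorem 5.6. A finite-index definable $H'\le H$ maps with finite kernel onto a finite-index subgroup of $\Hh(\Q_p)$, where $\Hh$ is a split solvable algebraic group. Linear groups over $\Q_p$ are naive (\cite{paper} Example 2.6(b)). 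Passing to a finite-index subgroup and dividing by a finite kernel each change the bound only by a finite factor.
\end{itemize}
This is how the paper proves the lemma.
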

\begin{proof}
 Obviously, $G$ is $\Ad$-regular.  It remains to prove that it is naive.

The group $G$ is definably amenable. By the dfg-fsg decomposition theorem (\cite{PYZ} Theorem 8.8),
there is a definable exact sequence
$$1\rightarrow H\rightarrow G\rightarrow C\rightarrow 1$$
where $H$ is a dfg group and $C$ is an fsg group.

The group $C$ is compact and is therefore naive by \cite{paper} Example 2.6(a).

By the characterization of dfg groups (\cite{PYZ} Theorem 5.6), there are a finite-index definable
subgroup $H'\leq H$, a connected $\Q_p$-split solvable algebraic group $\Hh$, and a definable
homomorphism $f: H'\rightarrow \Hh(\Q_p)$ with  finite kernel and finite-index image. The linear group $\Hh(\Q_p)$ is naive by \cite{paper} Example 2.6(b). Then it is easy to see that $H$ is naive.

By Lemma \ref{lemma-naive}, $G$ is naive.
\end{proof}

\begin{lemma}\label{lemma-comm-by-finite}
Let $G$ be a commutative-by-finite definable group. Then $G$ is weakly regular and naive.
\end{lemma}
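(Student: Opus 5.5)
Since $G$ is commutative-by-finite, there is a commutative normal subgroup of finite index. I first want such a subgroup that is definable. Given a commutative subgroup $A$ of finite index, I take its centralizer-type replacement. Let $A_0 = Z(C_G(A))$, or more simply use that $C_G(A)$ is definable and contains $A$. Then $C_G(C_G(A))$ is a definable commutative subgroup containing $A$: it is commutative because $A\le C_G(A)$ gives $C_G(C_G(A))\le C_G(A)$. So $A' := C_G(C_G(A))$ is a definable commutative subgroup of finite index. Intersecting its finitely many conjugates gives a definable commutative normal subgroup $N$ of finite index. By Lemma \ref{lemma-commutative-regular}, $N$ is regular, hence weakly regular and naive.

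Naivety of $G$ then follows from Lemma \ref{lemma-naive} applied to $1\to N\to G\to G/N\to 1$. Here $G/N$ is finite, hence trivially naive.

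For weak regularity, let $\phi,\psi:\Q_p\to G$ be one-parameter morphisms with the same derivative. The key observation is that every one-parameter subgroup lies in $N$. Indeed, $\phi^{-1}(N)$ is a finite-index subgroup of $(\Q_p,+)$. Since $N$ is open (finite index and closed in a Lie group), this subgroup is also open. But $(\Q_p,+)$ is divisible, and a finite-index subgroup $K$ of a divisible group is everything. To see this, take $k=[\Q_p:K]$; then $k\Q_p\subseteq K$, and $k\Q_p=\Q_p$. So $\phi(\Q_p),\psi(\Q_p)\subseteq N$, and $\phi,\psi$ are one-parameter morphisms of $N$. Since $N$ is open in $G$, it has the same Lie algebra, and the derivatives computed in $N$ agree with those in $G$. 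Weak regularity of $N$ then gives $\phi=\psi$.

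The only step needing care is producing the definable commutative normal finite-index subgroup. The double-centralizer trick handles this, together with the fact that a definable finite-index subgroup is open, hence an open Lie subgroup with the same Lie algebra. I expect the rest to be routine.
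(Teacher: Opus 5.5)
\textbf{Gap: why is $C_G(A)$ definable?} Your overall route is the paper's: a double centralizer to get a definable commutative subgroup of finite index, then the lemma on commutative definable groups, then divisibility of $(\Q_p,+)$ for weak regularity. But the step you yourself flag as the one needing care is only asserted. You say ``$C_G(A)$ is definable and contains $A$'' with no reason. Yet $A$ is an arbitrary, possibly non-definable, subgroup. So $C_G(A)=\bigcap_{a\in A}C_G(a)$ is an infinite intersection of definable subgroups, and in $\Q_p$ such intersections need not be definable, since definable subgroups satisfy no descending chain condition. For instance, in $\Q_p^\times$ the subgroups $p^{\Z}(1+p^n\Z_p)$ are definable, being of finite index and hence finite unions of cosets of some $(\Q_p^\times)^k$. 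Their intersection, however, is $p^{\Z}$, which is infinite with empty interior and so not definable. The definability of $C_G(C_G(A))$, and hence of your $N$, depends entirely on this unproved claim. The double-centralizer trick therefore does not by itself handle the hard step.

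\textbf{The missing argument, which is exactly what the paper supplies.} Since $A$ is commutative, $A\le C_G(a)$ for every $a\in A$. Any subgroup of $G$ containing $A$ is a union of cosets of $A$, so there are at most $2^{[G:A]}$ such subgroups. Hence $C_G(A)=C_G(a_1)\cap\dots\cap C_G(a_r)$ for finitely many $a_i\in A$, which is definable. With this inserted, the rest of your argument is correct:
\begin{enumerate}
\item $C_G(C_G(A))$ is commutative and contains $A$, as you verified.
\item Passing to the normal core is a harmless way to apply Lemma \ref{lemma-naive} to $1\to N\to G\to G/N\to 1$; the paper just says naivety is ``easy to see''.
\item Your index argument ($k\Q_p=\Q_p$) correctly shows one-parameter subgroups lie in $N$.
\item Since $N$ is open, $\Lie(N)=\Lie(G)$, so weak regularity of $N$ gives $\phi=\psi$.
\end{enumerate}
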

\begin{proof}
Let $K$ be a commutative subgroup of finite index. For $g\in K$, the centralizer $Z_G(g)$ is definable and $K\leq Z_G(g)$. Since $K$ has a finite index, there are only finitely many possible $Z_G(g)$ for $g\in K$. Then $Z_G(K)=\bigcap_{g\in K} Z_G(g)$ is, in fact, a finite intersection, and hence definable.  Then $Z_G(Z_G(K))$ is a definable finite-index commutative subgroup of $G$.  Hence, we may assume that $K$ is definable.

By Lemma \ref{lemma-commutative-regular}, $K$ is naive, and it is easy to see that $G$ is naive.

Now let $\phi_1,\phi_2:(\Q_p,+)\rightarrow G$ be one-parameter morphisms with the same derivative. Since $(\Q_p,+)$ is divisible, the images of $\phi_1$ and $\phi_2$ are, in fact, in $K$. As $\Lie(G)=\Lie(K)$, the derivatives of $\phi_1$ and $\phi_2$ remain equal when $\phi_1$ and $\phi_2$ are viewed as maps into $K$. By Lemma \ref{lemma-commutative-regular}, $\phi_1=\phi_2$.
\end{proof}

\begin{prop}
Every definable group is naive.
\end{prop}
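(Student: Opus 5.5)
The plan is to reduce to the building blocks already handled, using Lemma \ref{lemma-naive} to pass through exact sequences. Since every finite subgroup of a finite-index subgroup $G'\le G$ gives, after intersecting, a subgroup of index at most $[G:G']$, naivety of $G$ follows from naivety of any finite-index subgroup. So we may freely pass to definable finite-index subgroups, and it suffices to find a suitable decomposition of such a subgroup.

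The first step is to split off the kernel of the adjoint representation. The map $\Ad_G: G\rightarrow \GL(\Lie(G))$ is definable (by the canonical definable Lie structure, \cite{P}), so we have a definable exact sequence
$$1\rightarrow \ker(\Ad_G)\rightarrow G\rightarrow \Ad_G(G)\rightarrow 1.$$
The image $\Ad_G(G)$ is a subgroup of the linear group $\GL_n(\Q_p)$, which is naive by \cite{paper} Example 2.6(b); naivety passes to subgroups trivially. By Lemma \ref{lemma-naive}, it then suffices to show that $N=\ker(\Ad_G)$ is naive.

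The second step, which I expect to be the main obstacle, is to analyse $N$. Elements of $N$ act trivially on $\Lie(G)$, so each $g\in N$ centralizes an open neighbourhood of the identity: conjugation by $g$ agrees with the identity near $1$ since its derivative is the identity and it is an analytic automorphism (for $p$-adic Lie groups, an automorphism with trivial differential fixes a small open subgroup pointwise, via the exponential/log chart). Thus for $g\in N$, $Z_G(g)$ is a definable open subgroup. I would then show that $N$ has a definable open subgroup $U$ which is commutative and that $N$ is commutative-by-finite, or at least that $Z_N(U)$ has finite index in $N$ after shrinking. The natural route: take a definable compact open subgroup $\Omega$ of $G$; the definable family $\{Z_G(g)\cap \Omega: g\in N\}$ consists of open subgroups of $\Omega$, and by uniform definability plus compactness/ACC-type arguments for definable open subgroups in $\Q_p$ (a uniformly definable family of open subgroups of a compact group contains a common open subgroup, since the indices are uniformly bounded — this is where I would use that definable families in $\Q_p$ have bounded finite sizes) we get an open subgroup $V\le\Omega$ centralized by all of $N$. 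Then $N\cap V$ is central in $N$, hence commutative and definable, with $N/(N\cap V)$ discrete definable, thus finite. Hence $N$ is commutative-by-finite, and Lemma \ref{lemma-comm-by-finite} shows $N$ is naive.

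Combining, $G$ is naive by Lemma \ref{lemma-naive}. If the uniform-open-subgroup step fails, a fallback is to bound finite subgroups of $N$ directly: any finite $F\le N$ lies in $Z_G(F)$, and one can work in $Z_N(V)\cdot$(finite), but I would first try the definable-family argument above.
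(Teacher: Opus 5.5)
\textbf{Gap: the step ``$N/(N\cap V)$ discrete definable, thus finite'' is false.} Your reduction has the same shape as the paper's: an extension of a linear group by a commutative-by-finite group, followed by Lemma \ref{lemma-naive}, \cite{paper} Example 2.6(b) and Lemma \ref{lemma-comm-by-finite}. The paper does not prove the commutative-by-finite part; it takes it from \cite{JY} Lemma 2.11, and in the weak-regularity proof uses \cite{JY} Theorem 2.9 for exactly the fact that $\ker(\Ad_G)$ is commutative-by-finite. Your attempt to prove that fact directly breaks at the step above. Since $V\le\Omega$ is compact, $N\cap V$ is compact, so it has infinite index in $N$ whenever $N$ is non-compact. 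For example, for $G=(\Q_p,+)$ you have $N=G$ and $N\cap V\le\Z_p$. ``Discrete implies finite'' is valid for definable subsets of $\Q_p^n$, but $N/(N\cap V)$ is only interpretable, and $\Q_p/\Z_p$ shows such quotients can be infinite. So the open subgroup $V$ centralized by $N$, although you obtain it correctly, does not give you a commutative subgroup of finite index.

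\textbf{How to repair it.} Apply finiteness only to genuinely definable sets, as in Lemma \ref{lemma-Ad-ker}. For $g\in N$ the centralizer $Z_G(g)$ is open. So the conjugacy class $g^N\subseteq G$ is a definable set in bijection with the countable set $N/Z_N(g)$, hence finite. By uniform finiteness in $\Q_p$ there is $M$ with $[N:Z_N(g)]=|g^N|\le M$ for all $g\in N$. Bounded conjugacy classes alone do not give an abelian subgroup of finite index (infinite extraspecial groups are a counterexample), so you need one more ingredient. Since $\Q_p$ is NIP, Baldwin--Saxl gives $g_1,\dots,g_k\in N$ with $Z(N)=Z_N(g_1)\cap\cdots\cap Z_N(g_k)$. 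This is a definable commutative subgroup of index at most $M^k$, so Lemma \ref{lemma-commutative-regular} and your finite-index remark finish the proof. Alternatively, cite \cite{JY} Theorem 2.9 directly, as the paper does.
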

\begin{proof}
By \cite{JY} Lemma 2.11, there is a definable exact sequence $1\rightarrow A\rightarrow G\rightarrow L\rightarrow 1$
where $A$ is commutative-by-finite and $L$ is  definably isomorphic to a definable
subgroup of some $\GL_n(\Q_p)$. The group $A$ is naive by Lemma \ref{lemma-comm-by-finite}, and $L$ is naive by \cite{paper}
Example 2.6(b). Then apply Lemma \ref{lemma-naive}.
\end{proof}

\begin{lemma}\label{lemma-Ad-ker}
Let $G$ be a definable group and $\gG$ its Lie algebra. Then the image of every one-parameter morphism $\phi:(\Q_p,+)\rightarrow G$ centralizes $\ker(\Ad_G:G\rightarrow \GL(\gG))$.
\end{lemma}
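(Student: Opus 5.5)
The plan is a two-step centralizer argument: local commutation coming from the exponential map, then a passage from a compact piece of $\phi(\Q_p)$ to all of it using naivety. Fix $n\in\ker(\Ad_G)$ and a one-parameter morphism $\phi$.

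\emph{Local step.} Take a small compact open neighbourhood of $1_G$ on which the $p$-adic exponential (or a local chart compatible with the Lie structure) is a homeomorphism onto an open subgroup $U$. Conjugation by $n$ satisfies $n\exp(X)n^{-1}=\exp(\Ad_G(n)X)=\exp(X)$, so $n$ centralizes $U$. By continuity of $\phi$ and compactness, there is $k$ with $\phi(p^k\Z_p)\subseteq U$. Hence $n$ commutes with $\phi(t)$ for all $t\in p^k\Z_p$.

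\emph{Global step.} Let $B=Z_G(\phi(p^k\Z_p))$. Centralizers in $G$ are definable: $\Q_p$ is NIP, so there is a uniform chain condition on centralizers, exactly as for $Z_G(K)$ in the proof of Lemma \ref{lemma-comm-by-finite}. Thus $B$ is a definable subgroup. It contains $n$, and it contains $\phi(\Q_p)$ because $\phi(\Q_p)$ is abelian. Its center $Z(B)$ is also definable and contains $\phi(p^k\Z_p)$. The composite $\bar\phi:\Q_p\to B/Z(B)$ vanishes on $p^k\Z_p$, so it factors through $\Q_p/p^k\Z_p\cong\Z(p^\infty)$. Every nontrivial quotient of $\Z(p^\infty)$ is again isomorphic to $\Z(p^\infty)$, which has cyclic subgroups of every order $p^i$. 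If $B/Z(B)$ is naive, $\bar\phi$ must therefore be trivial. Then $\phi(\Q_p)\subseteq Z(B)$, and since $n\in B$, $n$ commutes with all of $\phi(\Q_p)$, which is the claim.

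\emph{Main obstacle.} The difficulty is justifying naivety of $B/Z(B)$. The preceding Proposition gives naivety for definable groups, but $B/Z(B)$ is a priori only interpretable, and $\Q_p$ does not eliminate imaginaries in the field sort. My first attempt is to avoid the quotient by working inside $B$ and using naivety of the definable group $B$ together with Lemma \ref{lemma-naive}.

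Concretely, suppose $\bar\phi\neq 0$. Then for every $i$ there is $c_i\in\Q_p$ with $\phi(c_i)$ of order exactly $p^i$ modulo $Z(B)$. I would then produce finite subgroups of $B$ of unbounded size. One route is to apply Lemma \ref{lemma-comm-by-finite} and the Proposition to the definable group $Z_B(\phi(c_i))\cdot Z(B)$. Another is to show that $Z(B)$ is naive (it is commutative and definable, so Lemma \ref{lemma-commutative-regular} applies). Then a bound on finite subgroups of $B$ plus a bound on finite subgroups of $Z(B)$ should bound the finite subgroups of $\langle \phi(c_i)\rangle Z(B)/Z(B)$.

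If this lifting argument becomes messy, I would fall back on an alternative. Replace $B/Z(B)$ by its definable image under the conjugation action of $B$ on a suitable definable set, for example $\Ad$ together with the action on a finite set of generators of $B$ modulo $Z(B)$ given by the chain condition. This realizes $B/Z(B)$ as a definable group, to which the Proposition applies directly.
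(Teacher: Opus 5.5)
Your local step is fine. It amounts to the paper's observation that $\Lie(Z_G(n))=\gG^{\Ad(n)}=\gG$, i.e.\ that $Z_G(n)$ is open. The global step, however, has a genuine gap, and neither of your repairs closes it.

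\textbf{Definability of $B$.} The definability of $B=Z_G(\phi(p^k\Z_p))$ is unsupported. NIP gives Baldwin--Saxl (every finite intersection of centralizers is an intersection of boundedly many), not a descending chain condition on centralizers. Also, the argument in Lemma~\ref{lemma-comm-by-finite} used that $K$ has finite index, not NIP.

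\textbf{The lifting attempt.} This runs Lemma~\ref{lemma-naive} backwards. Naivety passes from kernel and quotient to the extension, not from the extension and kernel to the quotient. The element $\phi(c_i)$ typically has infinite order in $B$ (e.g.\ whenever $\phi$ is injective), so there is no finite subgroup to lift. Moreover, $\Z_p\le\Q_p$, both naive with non-naive quotient $\Q_p/\Z_p$, is exactly the shape of $\phi(\Q_p)$ modulo $\phi(p^k\Z_p)$.

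\textbf{The fallback.} This needs $Z(B)=Z_B(b_1)\cap\dots\cap Z_B(b_m)$ for finitely many $b_j$, which is the same unproven chain condition.

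The gap is essential, because every ingredient you actually use also holds for interpretable groups, where the statement fails. Let $H=\Q_p\times\Q_p\times(\Q_p/\Z_p)$ with $(a,b,c)(a',b',c')=(a+a',b+b',c+c'+ab')$. This group is interpretable in $\Q_p$, its Lie algebra is abelian, and $\ker\Ad_H=H$. Yet $\phi(t)=(t,0,0)$ does not commute with $n=(0,1,0)$ for $t\notin\Z_p$. In $H$ your local step holds with $k=0$, and $B/Z(B)$ contains a copy of $\Q_p/\Z_p$. Moreover, the centralizers $Z_H((p^{-i},0,0))$ form an infinite strictly descending chain in this NIP group.

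So the proof must use that some relevant quotient lives in the home sort, and the paper does exactly this. Since $Z_G(n)$ is open and definable, the conjugacy class $n^G\subseteq G$ is a definable set in bijection with $G/Z_G(n)$. It has dimension $\dim G-\dim Z_G(n)=0$ and is therefore finite. Conjugation by $\phi$ then gives a homomorphism from the divisible group $\Q_p$ to the finite group $\Sym(n^G)$. This homomorphism must be trivial, so $\phi(\Q_p)$ fixes $n$. No naivety and no chain condition on centralizers is needed. In $H$ the corresponding class $\{(0,1,c)\}$ is an infinite $0$-dimensional imaginary set, which is precisely where the argument breaks down.
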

\begin{proof}
Take $a\in \ker(\Ad_G)$. Since $\Ad(a)=1$, we have $\Lie(Z_G(a))=\gG^{\Ad(a)}:=\{X\in \gG:\Ad(a)(X)=X\}=\gG$. The conjugacy class $a^G:=\{a^g:g\in G\}$ is definable and $0$-dimensional. So $a^G$ is finite. The conjugation
through $\phi$ induces a group morphism $\theta:(\Q_p,+)\rightarrow \Sym(a^G)$ where $\Sym(a^G)$ is the symmetric group of $a^G$. Since $(\Q_p,+)$ is divisible and $\Sym(a^G)$ is finite, the morphism $\theta$ is trivial. Hence, $\phi(\Q_p)$  centralizes  $\ker(\Ad_G)$.
\end{proof}

\begin{prop}
Every definable group is weakly regular.
\end{prop}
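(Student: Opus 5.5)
Let $\phi_1,\phi_2:(\Q_p,+)\rightarrow G$ be one-parameter morphisms with the same derivative $X\in\gG$. The plan is to reduce to the commutative-by-finite case of Lemma \ref{lemma-comm-by-finite} by passing through the adjoint representation. Write $N=\ker(\Ad_G)$, a definable normal subgroup. The quotient $G/N$ embeds into $\GL(\gG)$ via $\Ad_G$; the image is a linear group. Linear $p$-adic groups are regular, hence weakly regular, by \cite{paper}, so I would first show that $\Ad_G\circ\phi_1=\Ad_G\circ\phi_2$. Indeed, both are one-parameter morphisms of $\GL(\gG)$ with derivative $\ad(X)$. By weak regularity of $\GL(\gG)$, or directly because $\Ad\circ\phi_i(t)=\exp(t\,\ad X)$ near $0$ and then everywhere by divisibility and continuity, they coincide. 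Hence $\phi_1(t)^{-1}\phi_2(t)\in N$ for all $t$.

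Next I would use Lemma \ref{lemma-Ad-ker}: both images $\phi_1(\Q_p)$ and $\phi_2(\Q_p)$ centralize $N$. Consider the subgroup $M=\langle N,\phi_1(\Q_p)\rangle$. Since $\phi_2(t)\in\phi_1(t)N$, we get $\phi_2(\Q_p)\subseteq M$. The goal is to show that $M$ (or a suitable definable group containing it) is commutative-by-finite and definable, so that Lemma \ref{lemma-comm-by-finite} applies.

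\textbf{Commutativity.} First, $\phi_1(\Q_p)$ is abelian and centralizes $N$. It remains to handle $N$ itself: I would show that $N$ is commutative-by-finite. Every $a\in N$ has finite conjugacy class (the argument in Lemma \ref{lemma-Ad-ker}), so $Z_G(a)$ has finite index in $G$. By a uniform bound on indices (definability in families, or the ``finitely many centralizers'' trick used in Lemma \ref{lemma-comm-by-finite}) one gets that $Z_N(N)$ has finite index in $N$. Then I would replace $M$ with the definable group $D=Z_G(Z_N(N))\cap Z_G(\ldots)$, or more simply work in $C=Z_G(N)\cdot N$. Concretely, take $D=N\cdot Z_G(N)$, or better the definable subgroup $Z_G(N)\cap Z_G(\phi_1(\Q_p))$ extended by $N$. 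The cleanest choice is probably $D=\{g\in Z_G(Z(N)\text{-part})\}$, and this is where care is needed. The intended outcome is that $D$ is a definable commutative-by-finite group containing both images.

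\textbf{Equality of derivatives.} With such a $D$ in hand, both $\phi_i$ map into $D$. Since $\phi_1(t)^{-1}\phi_2(t)\in N$, and $N$ has trivial adjoint action, the derivatives agree in $\Lie(D)\subseteq\gG$. Lemma \ref{lemma-comm-by-finite} then gives $\phi_1=\phi_2$.

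The main obstacle is producing the right definable commutative-by-finite group. The image $\phi_1(\Q_p)$ is not yet known to be definable, so I cannot simply take the group it generates with $N$. I would first try the following. Let $Z=Z_G(N)$, which is definable and contains both images. Inside $Z$, the subgroup $N\cap Z=Z(N)$ is central. The map $\Ad$ restricted to $Z$ has kernel $Z(N)$, which is abelian, and the $\phi_i$ agree modulo this central kernel. Then $\psi(t)=\phi_1(t)^{-1}\phi_2(t)$ is a continuous homomorphism $\Q_p\rightarrow Z(N)$ with zero derivative, because the images commute. A definable commutative group is weakly regular (Lemma \ref{lemma-commutative-regular}), and the trivial morphism also has derivative $0$, so $\psi$ is trivial. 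This last route, using $Z(N)$ and the commutation of $\phi_1$ with $\phi_2$ modulo a central subgroup, seems the most promising. Its delicate point is verifying that $\phi_1$ and $\phi_2$ commute: both lie in $Z$, $\phi_2(t)=\phi_1(t)\psi(t)$, and $\psi(t)$ is central in $Z$.
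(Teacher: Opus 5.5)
Your final paragraph contains a workable proof, and it departs from the paper at the decisive step. Everything before it should be discarded: the groups $M$ and $D$ are never constructed, and the claim that a uniform bound on indices makes $Z_N(N)$ of finite index in $N$ is not established.

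Up to the point where $\psi(t)=\phi_1(t)^{-1}\phi_2(t)$ lies in $N=\ker(\Ad_G)$ and is a homomorphism (via Lemma \ref{lemma-Ad-ker}), you follow the paper. The paper then kills $\psi$ inside $N$ itself. It uses \cite{JY} Theorem 2.9 to know that $N$ is commutative-by-finite, and then Lemma \ref{lemma-comm-by-finite} to conclude that $N$ is weakly regular.

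You instead apply Lemma \ref{lemma-Ad-ker} to \emph{both} $\phi_1$ and $\phi_2$. This puts $\psi(t)$ in $N\cap Z_G(N)=Z(N)$, which is abelian, and you then invoke Lemma \ref{lemma-commutative-regular}. This avoids the structure theorem from \cite{JY} and the commutative-by-finite detour, which is a real economy. Your check that $\psi$ is a homomorphism, using that $\psi(s)$ is central in $Z_G(N)$, is correct. Note that $\psi$ has derivative $0$ simply by the product rule at the identity, not because the images commute.

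The step you have not justified is ``$Z=Z_G(N)$, which is definable.'' Lemma \ref{lemma-commutative-regular} needs $Z(N)$ to be a \emph{definable} commutative group. But $Z_G(N)=\bigcap_{a\in N}Z_G(a)$ is an infinite intersection, and over $\Q_p$ there is no descending chain condition on definable subgroups (consider $p^n\Z_p\le\Z_p$). The claim is true here, but it needs an argument:
\begin{enumerate}
\item The proof of Lemma \ref{lemma-Ad-ker} shows that each $a^G$ with $a\in N$ is finite.
\item Uniform finiteness in $\Q_p$ then bounds $|a^G|\le k$, so $[G:Z_G(a)]\le k$ for all $a\in N$.
\item Since $\Q_p$ is NIP, Baldwin--Saxl says every finite intersection of these centralizers equals an intersection of at most $n$ of them, and so has index at most $k^n$.
\item A finite intersection of maximal index must therefore already equal $Z_G(N)$.
\end{enumerate}

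Alternatively, you can bypass definability altogether. A one-parameter morphism with derivative $0$ is trivial on some $p^m\Z_p$, so $\ker\psi$ is either $\Q_p$ or some $p^{m'}\Z_p$. The second case is excluded by the naivety of $G$, proved just before, by the argument of Lemma \ref{easy-1}.

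Finally, ``linear $p$-adic groups are regular'' is false in general: the paper's example $\Q_p\rtimes\{\pm1\}$ embeds in $\GL_2(\Q_p)$ and is not $\Ad$-regular. You only need the weak regularity of $\GL(\gG)$ itself, which is fine.
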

\begin{proof}
Let $G$ be a definable group and $\gG$ its Lie algebra. Let $A=\ker(\Ad_G:G\rightarrow\GL(\gG))$.
By  \cite{JY} Theorem 2.9, $A$ is commutative-by-finite. Hence it is weakly regular
by Lemma \ref{lemma-comm-by-finite}.

Let $\phi_1,\phi_2:(\Q_p,+)\rightarrow G$ have the same derivative $X$. The two linear one-parameter morphisms $\Ad\circ \phi_1$ and $\Ad\circ \phi_2$ have the same derivative $\ad(X)$, so they agree by the uniqueness of linear one
parameter morphisms (\cite{paper} Lemma 2.2). Hence,
$\theta(t)=\phi_1^{-1}(t)\phi_2(t)$ lies in $A$.

By Lemma \ref{lemma-Ad-ker}, for $s,t\in \Q_p$, we have $\theta(s+t)=\phi_1^{-1}(s+t)\phi_2(s+t)=\phi_1^{-1}(t)\phi_1^{-1}(s)\phi_2(s)\phi_2(t)=\phi_1^{-1}(t)\theta(s)\phi_1(t)\theta(t)=\theta(s)\theta(t)$. Hence, $\theta:(\Q_p,+)\rightarrow A$ is a one-parameter morphism. Its derivative is zero. The weak regularity of $A$
implies that $\theta$ is trivial, and hence $\phi_1=\phi_2$.
\end{proof}

In summary, we have proved Theorem \ref{main-1}.

\section{Definability of one-parameter subgroups}

We first give two easy consequences of naivety.

\begin{lemma}\label{easy-1}
Let $G$ be a naive $p$-adic Lie group and $\phi:(\Q_p,+)\rightarrow G$  a one-parameter
morphism. Then $\phi$ is either trivial or injective.
\end{lemma}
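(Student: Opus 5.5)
The plan is to study $\ker\phi$, which is a closed subgroup of $(\Q_p,+)$ because $\phi$ is continuous. The closed subgroups of $(\Q_p,+)$ are exactly $0$, $\Q_p$, and the balls $p^k\Z_p$ for $k\in\Z$. So it suffices to rule out $\ker\phi=p^k\Z_p$. In that case $\phi$ would induce an injective homomorphism from $\Q_p/p^k\Z_p$ into $G$. The key observation is that $\Q_p/p^k\Z_p\cong \Z[1/p]/\Z$ (a Prüfer $p$-group) contains finite cyclic subgroups of every order $p^n$, namely $p^{k-n}\Z_p/p^k\Z_p\cong \Z/p^n\Z$.

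Under injectivity of the induced map, the subgroups $\phi(p^{k-n}\Z_p)$ are finite subgroups of $G$ of order $p^n$ for all $n\ge 1$. This contradicts naivety, which demands a uniform bound on the cardinality of finite subgroups. Hence $\ker\phi$ is either $0$, so that $\phi$ is injective, or all of $\Q_p$, so that $\phi$ is trivial.

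The only point needing care is the classification of closed subgroups of $\Q_p$, and I will verify it directly.

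Let $K\neq 0$ be a closed subgroup. If $x\in K$ has valuation $v$, then $\Z x\subseteq K$, and its closure $\Z_p x=p^v\Z_p$ also lies in $K$.

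If the valuations of elements of $K$ are unbounded below, then $K$ contains arbitrarily large balls and so $K=\Q_p$. Otherwise let $k$ be the minimum valuation. Then $p^k\Z_p\subseteq K$, and every element of $K$ has valuation at least $k$, so $K\subseteq p^k\Z_p$. Thus $K=p^k\Z_p$.

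There is no real obstacle; this step is routine.
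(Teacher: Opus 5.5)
Your proof is correct and follows essentially the same route as the paper. Both use the classification of closed subgroups of $(\Q_p,+)$ and rule out $\ker\phi=p^k\Z_p$, since the quotient would give finite cyclic subgroups of $G$ of every order $p^n$, contradicting naivety; you also verify the classification, which the paper only asserts.
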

\begin{proof}
The closed subgroups of $(\Q_p,+)$ are precisely $\{0\}$, $\Q_p$ and  $p^m\Z_p$ for $m\in\Z$. If $\ker(\phi)=p^m\Z_p$, then the quotient $\Q_p/p^m\Z_p$ contains finite cyclic subgroups of order $p^n$ for every $n\in\N$. Their images are finite subgroups of $G$ of unbounded order, which contradicts naivety.
\end{proof}

\begin{lemma}\label{easy-2}
Let $G$ be a compact $p$-adic Lie group and $\phi:(\Q_p,+)\rightarrow G$ a one-parameter
morphism. Then $\phi$ is trivial.
\end{lemma}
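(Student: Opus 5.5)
The plan is to use the fact that a compact $p$-adic Lie group contains an open uniform (or at least torsion-free pro-$p$) subgroup $U$ of finite index, and that in such a group every element generates a closed subgroup isomorphic to $\Z_p$ or is trivial. The key observation is that $(\Q_p,+)$ is divisible, so its image cannot sit nontrivially inside a group where elements have only boundedly-divisible behaviour.

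First we reduce to the image lying in $U$. The preimage $\phi^{-1}(U)$ is an open subgroup of $\Q_p$, hence equals $\Q_p$ or $p^m\Z_p$ for some $m$. It has finite index in $\Q_p$, since the induced map $\Q_p/\phi^{-1}(U)\to G/U$ is injective and $G/U$ is finite. But $\Q_p/p^m\Z_p$ is infinite, so $\phi^{-1}(U)=\Q_p$, i.e.\ $\phi(\Q_p)\subseteq U$. (Equivalently, a divisible group has no proper finite-index subgroups.)

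Next we show that $U$ has no nontrivial divisible elements. Take $U$ to be a uniform pro-$p$ group, which exists by Lazard's theorem for compact $p$-adic Lie groups. In $U$ the $p$-power map $x\mapsto x^p$ is a bijection onto $U^p$ and $\bigcap_n U^{p^n}=\{1\}$. Now fix $t\in\Q_p$ and set $g=\phi(t)$. For every $n$ we have $g=\phi(p^{-n}t)^{p^n}\in U^{p^n}$, using that in a uniform group the set of $p^n$-th powers equals the subgroup $U^{p^n}$. Hence $g\in\bigcap_n U^{p^n}=\{1\}$, and $\phi$ is trivial.

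The main obstacle is justifying the structural fact about uniform groups: that $\{x^{p^n}\}=U^{p^n}$ and that these subgroups form a basis of neighbourhoods of $1$ with trivial intersection. If one prefers to avoid it, there is an alternative route via continuity. In a compact $p$-adic Lie group the elements $\phi(p^{-n})$ lie in a compact set, and $\phi(p^k)\to 1$ as $k\to\infty$. Using the exponential/logarithm on a small open subgroup, where $\log\phi(s)=sX$ for $s$ near $0$ (for $X$ the derivative), divisibility forces $\log\phi(p^{-n}s)=p^{-n}sX$. This is unbounded unless $X=0$, which would contradict compactness of the image in logarithmic coordinates. Once $X=0$, $\phi$ is locally trivial, so $\ker\phi$ is open, and the first step (or Lemma~\ref{easy-1}-style reasoning combined with divisibility into a compact group) gives triviality. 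I would present the uniform-group argument as the main proof.
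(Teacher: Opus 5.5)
Your main argument is correct and is essentially the paper's proof. The paper likewise takes an open (normal) uniform subgroup $K$ and uses divisibility of $(\Q_p,+)$ to get $\phi(\Q_p)\subseteq K$; it then concludes from $\phi(t)=\phi(p^{-n}t)^{p^n}\in K^{p^n}$ and $\bigcap_n K^{p^n}=\{1\}$. The step you flag as the main obstacle is lighter than you fear: $\phi(p^{-n}t)^{p^n}$ lies in $U^{p^n}$ simply because $U^{p^n}$ is by definition generated by $p^n$-th powers, so you only need $\bigcap_n U^{p^n}=\{1\}$.
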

\begin{proof}
Let $K$ be an open normal uniform pro-$p$ subgroup of $G$. Since $(\Q_p,+)$ is divisible, $\phi(\Q_p)$ lies in $K$. For $x=\phi(t)$ and $n\geq 1$, choose $s\in\Q_p$ with $p^ns=t$. Then $x=\phi(s)^{p^n}\in K^{p^n}$. As $\bigcap_{n\geq 0}K^{p^n}=\{1_G\}$, $x=1_G$.
\end{proof}

\begin{prop}\label{prop-def-one}
Let $G$ be definable in $\Q_p$ and $\phi:(\Q_p,+)\rightarrow G $ a one-parameter morphism.
Then $U=\phi(\Q_p)$ is a definable subgroup of $G$. If it is nontrivial, it is definably isomorphic to a linear unipotent one-parameter group, and in
particular, $U$ has dfg.
\end{prop}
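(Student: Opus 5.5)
We may assume $\phi$ is nontrivial; by Lemma \ref{easy-1} and Theorem \ref{main-1} it is then injective. The plan is to reduce to the linear case through the adjoint representation, and then pull definability back through the kernel $A=\ker(\Ad_G)$, which is commutative-by-finite.

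First consider $\psi=\Ad\circ\phi:(\Q_p,+)\to \GL(\gG)$. By \cite{paper} Lemma 2.2, a linear one-parameter morphism has the form $\psi(t)=\exp(tY)$ with $Y=\ad(X)$, where $X$ is the derivative of $\phi$. Its image is bounded unless $Y$ is nilpotent. If $\psi(\Q_p)$ were relatively compact, Lemma \ref{easy-2} applied to its closure would force $\psi$ to be trivial. More precisely, write the Jordan decomposition $Y=Y_s+Y_n$. Since $\exp(tY_s)$ must be defined for all $t\in\Q_p$, the eigenvalues of $Y_s$ must be $0$, so $Y$ is nilpotent. Hence $\psi(t)=\exp(tY)$ is a polynomial map in $t$, and $\psi(\Q_p)$ is a definable (Zariski closed) unipotent one-parameter subgroup, or trivial.

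Case 1: $\psi$ is nontrivial. Then $\psi$ is injective, and $\phi(\Q_p)\cap A=\{1\}$. The map $\Ad$ restricted to $U=\phi(\Q_p)$ is a bijection onto the definable group $V=\psi(\Q_p)$. To show $U$ is definable, I would argue as follows. The preimage $\Ad^{-1}(V)$ is definable, with kernel $A$. By Lemma \ref{lemma-Ad-ker}, $U$ centralizes $A$, so $\Ad^{-1}(V)=A\cdot U$ is a central-ish extension. The goal is to pick out $U$ definably as the set of elements of $\Ad^{-1}(V)$ that are infinitely divisible within $\Ad^{-1}(V)$, or more concretely as the connected one-parameter part. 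Concretely, I would take a definable open neighborhood on which the exponential or logarithm is a definable analytic chart, so that $\phi$ restricted to a neighborhood of $0$ is definable. Then $U=\bigcup_n \phi(p^{-n}\Z_p)$, with $\phi(p^{-n}s)$ determined by $\Ad$: it is the unique element $g\in\Ad^{-1}(V)$ with $\Ad(g)=\psi(p^{-n}s)$ and $g^{p^n}=\phi(s)$, the latter lying in the chart. Uniqueness of $p^n$-th roots along this line would use weak regularity and Lemma \ref{easy-1}. This gives a uniform definable description:
$$U=\{g\in\Ad^{-1}(V): \exists n,\ g^{p^n}\in\phi(\Z_p)\}.$$
The step "$\exists n$" is not first-order. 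The main obstacle is therefore to replace it with a bounded description. I would try a valuation trick: the element $\psi^{-1}(\Ad(g))=t$ is definable from $g$, so set $U=\{g\in \Ad^{-1}(V): g=\phi_t\}$, where $g$ is characterized as the unique element with $\Ad(g)=\psi(t)$ commuting with $\phi(\Z_p)$ and satisfying $g^{p^{m}}\in\phi(\Z_p)$ for $m=\max(0,-v(t))$. That last condition is definable since $x\mapsto x^{p^m}$ with $m$ a definable function of $t$ can be expressed through the definable map $(t,s)\mapsto\phi(ts)$ on $\Z_p$-scaled pieces.

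Case 2: $\psi$ is trivial. Then $U\le A$, and $U$ lies in the definable commutative finite-index subgroup $K$ of $A$ from Lemma \ref{lemma-comm-by-finite}. Apply the dfg-fsg decomposition $1\to H\to K\to C\to 1$. Composing $\phi$ with $K\to C$ and using Lemma \ref{easy-2}, the result is trivial, so $U\le H$. Then \cite{PYZ} Theorem 5.6 gives a finite-kernel map $f:H'\to\Hh(\Q_p)$, and $U\le H'$ by divisibility. Now $f\circ\phi$ is a nontrivial linear one-parameter morphism, so by the linear argument its image is a definable unipotent group $W$. Then $U$ is the connected (divisible) part of $f^{-1}(W)$, which is definable by the same chart argument, now easier because $f^{-1}(W)$ is commutative-by-finite and $U$ equals its $p^\infty$-divisible elements, $\bigcap_n$ of the $p^n$-th powers, which stabilize by finiteness of $\ker f$.

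Finally, in both cases $U\cong(\Q_p,+)$ definably via its image $V$ or $W$, a linear unipotent one-parameter group. Such a group is $\Q_p$-split solvable, so $U$ has dfg.
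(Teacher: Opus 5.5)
Your Case 2 is essentially the paper's argument, modulo one fix noted below. Your Case 1 ($\Ad\circ\phi$ nontrivial), however, has a genuine gap.

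The ``chart argument'' assumes a definable neighbourhood of $1$ on which the exponential or logarithm of $G$ is definable, so that $\phi$ is definable near $0$. In the pure field this is false in general, since definable functions are piecewise algebraic; already $\log$ on $1+p\Z_p\le\Q_p^\times$ is not definable. The condition $g^{p^m}\in\phi(\Z_p)$ with $m=\max(0,-v(t))$ is not first-order either. First, $\phi(\Z_p)$ is not known to be definable. Second, powering $g\mapsto g^{p^m}$ uniformly in an unbounded $m$ is not definable in general: in $\Q_p^\times$ it would define the infinite set $\{(1+p)^{p^n}:n\in\N\}$, which has empty interior. Your remedy, expressing this through the map $(t,s)\mapsto\phi(ts)$, presupposes that $\phi$ is definable. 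But then $U=\phi(\Q_p)$ would be definable outright, so the argument is circular. The claimed uniqueness of such roots also fails as soon as $A$ has elements of order $p$, since $U$ centralizes $A$.

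The missing idea is to place $U$ inside a definable \emph{commutative} group in this case as well. Set $V=\Ad(U)$, $P=\Ad^{-1}(V)$ and $K=Z_P(A)$; both $P$ and $K$ are definable. Lemma~\ref{lemma-Ad-ker} gives $U\le K$. For $x\in K$, choose $u\in U$ with $\Ad(x)=\Ad(u)$; then $xu^{-1}\in A\cap K=Z(A)$, so $K=Z(A)U$ is commutative. Now run your Case 2 argument on this $K$. This is exactly the paper's proof, which treats both cases uniformly.

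In that argument, correct one step. Let $F=\ker f$ and $D=f^{-1}(f(U))=UF$. Then $U$ is not the set of $p^\infty$-divisible elements of $D$. Since $D$ is commutative (it lies in $K$, so it is not merely commutative-by-finite), $\bigcap_n D^{p^n}=UF_{p'}$, where $F_{p'}$ consists of the elements of $F$ of order prime to $p$. For example, $D=U\times\{\pm1\}$ with $p$ odd has $D^{p^n}=D$ for all $n$. Instead take $U=D^m$ with $m$ the exponent of $F$: for $u\in U$ and $a\in F$ we have $(ua)^m=u^m$, and $U\le D^m$ by divisibility. The appeal to the chart argument there is unnecessary and should be dropped.
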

\begin{proof}
 The trivial case is clear. We assume that $U$ is non-trivial and, by Theorem \ref{main-1} and Lemma \ref{easy-1}, $U\cong (\Q_p,+)$. 

 Let $A=\ker(\Ad_G)$. By Lemma \ref{lemma-Ad-ker},  $U$ centralizes $A$. Let $V=\Ad_G(U)$.

\noindent \underline{Case 1. $V$ is trivial}. Then $U\leq A\cap Z_G(A)=Z(A)$.  So $U$ is contained in the definable commutative group $K=Z(A)$.

\noindent \underline{Case 2. $V$ is non-trivial}. Then $V$ is a nontrivial linear one-parameter group  given by an injective morphism $t\mapsto \exp(tX)$ with $X$ nilpotent (\cite{paper} Lemma 2.2).
Here, $\exp(tX)=\sum_{n\geq 0}\frac{t^nX^n}{n!}$ is, in fact, a finite sum and hence the morphism $t\mapsto \exp(tX)$ is definable.
Let $P=\Ad_G^{-1}(V)$ and $K=Z_P(A)$. Both $P$ and $K$ are definable.  For $x\in K$, choose $u\in U$ with $\Ad_G(x)=\Ad_G(u)$. Then
$xu^{-1} \in A\cap K =Z(A)$. Thus,
$K =Z(A)U$. Since $U$ centralizes $A$, the group $K$ is commutative.

Anyway, in either case, $U$ lies in a definable commutative group $K$. Apply the dfg-fsg decomposition on $K$ (\cite{PYZ} Theorem 8.8):
$$1\rightarrow H\rightarrow K\rightarrow C\rightarrow 1,$$
where $H$ has dfg and $C$ is compact. By Lemma \ref{easy-2}, $U\leq H$. By the description of $H$ (\cite{PYZ} Theorem 5.6), there are a finite-index definable subgroup $H'\leq H$, a connected split solvable algebraic group $\Hh$, and a definable homomorphism $f: H'\rightarrow \Hh(\Q_p)$ with finite kernel and finite-index image. 

The divisibility of $U$ implies $U\leq H'$. The image $L=f(U)$ is a linear one-parameter subgroup, which is of the form $\exp(tX)$ and is hence definable. Let $D=f^{-1}(L)$. Then $D$ is a definable commutative group and $D=U F$ where $F=\ker (f)$.

Let $m$ be an exponent of the finite group $F$. We claim that $$U=D^m:=\{d^m:d\in D\}.$$
The divisibility of $U$ implies $U\leq D^m$. The other direction is easy to check: for $u\in U$ and $a\in F$, $(ua)^m=u^m a^m=u^m\in U$.

Hence, $U$ is definable. Moreover, $U\cap F$ is trivial, since $U$ is torsion-free. Hence, $f|_U:U\rightarrow L$ is a definable isomorphism. 
Then $U$ has dfg because $L$ has dfg.
\end{proof}

\begin{remark} The above proposition does not assert that the one-parameter morphism $\phi$ itself is
definable. It asserts that its image is a definable subgroup.
\end{remark}

\section{Definability of $G_u$}

Let $G$ be a weakly regular $p$-adic Lie group. Following \cite{paper}, write $G_u$ for the subgroup
generated by all one-parameter subgroups. Let $\rR$ be the solvable radical of $\gG= \Lie(G)$ and $\sS$ a
Levi subalgebra, i.e., a maximal
semisimple Lie subalgebra.  The Levi decomposition gives $\gG=\rR\rtimes\sS$.

Let $R_u$ be the subgroup of $G$ generated by
all the one-parameter subgroups of $G$ tangent to $\rR$, and $S_u$
 the subgroup of $G$ generated by all the one-parameter subgroups of $G$ tangent to $\sS$. 
 Benoist and Quint show that
 $$G_u=R_u\rtimes S_u \text{ and } R_u\cap S_u=\{1_G\},$$
where $R_u$ is algebraic unipotent and
$\Ad: S_u \rightarrow S^+:=\Aut (\sS_u)_u$
is onto with finite kernel (\cite{paper} Proposition 5.5). Here, $\rR_u=\Lie(R_u)$ and $\sS_u=\Lie(S_u)$.



Note that, if $G$ is definable, by Theorem \ref{main-1}, $G$ is weakly regular and then $G_u$, $R_u$ and $S_u$ are meaningful.

\begin{prop}\label{def-Ru}
Let $G$ be definable. Then $R_u$ is definable.
\end{prop}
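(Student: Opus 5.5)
The plan is to show that $R_u$ is a finite product of one-parameter subgroups. Each factor is then definable by Proposition \ref{prop-def-one}, and a finite product of definable sets is definable.

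By \cite{paper} Proposition 5.5 and Lemma 3.2, $R_u$ is algebraic unipotent. Concretely, there is a unipotent algebraic group $\Rr_0$ with $R_u=\Rr_0(\Q_p)$, and the identification is compatible with the $p$-adic Lie structure. In characteristic zero the exponential map $\exp:\Lie(\Rr_0)\to\Rr_0$ is a polynomial isomorphism of varieties. Choose a basis $X_1,\dots,X_k$ of $\rR_u=\Lie(R_u)$ adapted to a central series, for example one refining the lower central series. Then every element of $\Rr_0(\Q_p)$ can be written as $\exp(t_1X_1)\cdots\exp(t_kX_k)$ with $t_i\in\Q_p$; these are coordinates of the second kind.

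For each $i$, the map $t\mapsto \exp(t X_i)$, transported to $G$, is a one-parameter morphism of $G$ tangent to $\rR$. Let $U_i$ be its image. By weak regularity (Theorem \ref{main-1}) it is the unique one-parameter subgroup of $G$ with derivative $X_i$, up to reparametrization. Hence
$$R_u=U_1U_2\cdots U_k.$$
By Proposition \ref{prop-def-one}, each $U_i$ is a definable subgroup of $G$. So $R_u$ is the image of the definable map $U_1\times\cdots\times U_k\to G$, $(u_1,\dots,u_k)\mapsto u_1\cdots u_k$, and is therefore definable.

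The main obstacle is justifying that the identification $R_u=\Rr_0(\Q_p)$ supplied by \cite{paper} really turns the algebraic one-parameter subgroups $t\mapsto\exp(tX_i)$ into one-parameter subgroups of $G$. I also need to confirm that these subgroups exhaust $R_u$ through the second-kind coordinates. The identification is a continuous group isomorphism, and $t\mapsto\exp(tX)$ is a continuous homomorphism $\Q_p\to\Rr_0(\Q_p)$, so the first point should be fine.

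For surjectivity of the second-kind coordinates, I would first try induction on $\dim\Rr_0$. The inductive step quotients by the one-dimensional central subgroup $\exp(\Q_pX_k)$, which is itself a one-parameter subgroup.

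A fallback avoids the coordinates. The group generated by finitely many definable subgroups $U_1,\dots,U_k$ whose tangent spaces span $\rR_u$ is open in $R_u$. Since $R_u$ is a unipotent group, hence connected in a suitable sense (it is generated by one-parameter subgroups, all of them divisible), one expects a uniform bound $N$ with $R_u=(U_1\cdots U_k)^N$. To prove such a bound I would again use the nilpotent structure.
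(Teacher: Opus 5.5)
Your proposal is correct and follows essentially the same route as the paper. The paper uses \cite{paper} Lemma 3.2 to write $R_u=\exp(\Q_p X_1)\cdots\exp(\Q_p X_m)$ for a basis of $\rR_u$ adapted to a central series, applies Proposition~\ref{prop-def-one} to each factor, and treats the second-kind coordinates as a standard consequence of the polynomial exponential and logarithm coordinates, so your inductive justification and fallback are supplementary detail rather than a required step.
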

\begin{proof}
Recall that  $R_u$  is isomorphic
to the group of $\Q_p$-points of a unipotent algebraic group and its exponential and logarithm
coordinates are polynomial (\cite{paper} Lemma 3.2). Equivalently, if $X_1,\dots,X_m$ is a basis of $\rR_u$ adapted to a central series, then
$$R_u =\exp(\Q_p X_1)\cdots \exp(\Q_p X_m).$$
Each factor is a one-parameter subgroup of $G$ and is definable by Proposition \ref{prop-def-one}.
Hence, $R_u$ is definable.
\end{proof}

\begin{prop}\label{def-Su}
Let $G$ be definable. Then $S_u$ is definable.
\end{prop}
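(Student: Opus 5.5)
The plan is to show that $S_u$ is a product of finitely many one-parameter subgroups of $G$, and then apply Proposition \ref{prop-def-one} to each factor, as in the proof of Proposition \ref{def-Ru}.

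The key input is the structure of $S^+=\Aut(\sS_u)_u$. It is the subgroup generated by the unipotent one-parameter subgroups of the semisimple algebraic group $\Aut(\sS_u)$ over $\Q_p$. By Borel–Tits, this Kneser–Tits-type group is generated by the $\Q_p$-points of the unipotent radicals $\Uu^+$ and $\Uu^-$ of a pair of opposite minimal $\Q_p$-parabolic subgroups. Moreover, there is a bounded-length product decomposition
$$S^+=U^+U^-U^+\cdots U^{\pm}$$
with some fixed number $k$ of factors. This can be obtained, for instance, by a Bruhat-type argument: the Weyl group representatives can be realized inside $U^+U^-U^+$ of rank-one subgroups. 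Alternatively, since $S^+$ is Zariski dense and each $U^\pm$ is a connected unipotent group, a product of $\dim+1$ generating unipotent groups contains a dense open subset of the connected group, and two such dense open subsets multiply to the whole group. Each $U^{\pm}(\Q_p)$ is, as in Proposition \ref{def-Ru}, a product $\exp(\Q_p Y_1)\cdots\exp(\Q_p Y_r)$ for a basis adapted to a central series of its Lie algebra. Hence $S^+$ is a finite product of one-parameter subgroups $\exp(\Q_pY_j)$ with $Y_j\in\sS_u$ nilpotent.

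Next I transfer this to $G$. The map $\Ad:S_u\to S^+$ is onto with finite kernel $F$ (\cite{paper} Proposition 5.5). For each $Y_j$ there is a one-parameter subgroup $V_j\le S_u$ tangent to $\sS_u$ with $\Ad(V_j)=\exp(\Q_p\ad Y_j)$, and $V_j$ is definable by Proposition \ref{prop-def-one}. Then $W:=V_1\cdots V_N$ is definable and $\Ad(W)=S^+$, so $S_u=WF$. Since $F$ is finite, $S_u$ is a finite union of translates of a definable set, and hence definable.

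Another route, avoiding $F$, is to note that $F\subseteq\ker(\Ad_G)$ is finite. Then $S_u=\Ad_G^{-1}(S^+)\cap Z$ for a suitable definable group $Z$; but the product description above is cleaner.

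I expect the main obstacle to be the bounded-length product decomposition of $S^+$ into unipotent one-parameter subgroups. I would first try to cite Borel–Tits for $S^+=\langle U^+,U^-\rangle$, together with the standard fact that the group generated by finitely many definable connected (unipotent) subgroups is a bounded product of them. That fact is the Zilber indecomposability-type statement in the algebraic group setting, and it holds over $\Q_p$ at the level of $\Q_p$-points because of Bruhat decomposition with representatives in $U^-U^+U^-$.
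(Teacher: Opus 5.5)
Your architecture matches the paper's: write $S^+=\Aut(\sS_u)_u$ as a bounded product of unipotent one-parameter subgroups, lift each factor through $\Ad:S_u\to S^+$, apply Proposition \ref{prop-def-one}, and absorb the finite kernel via $S_u=WF$. The lifting is \cite{paper} Lemma 5.9, and you should cite it rather than assert it. That part is fine.

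The gap is the step you flag yourself: the bounded-length factorization of $S^+$ at the level of $\Q_p$-points. None of your three justifications proves it.

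\textbf{The Zariski-density argument.} It only works for $\overline{\Q}_p$-points. A $\Q_p$-point in the image of a dominant product map $\Uu^{\pm}\times\cdots\times\Uu^{\pm}\to\mathbf{G}$, with $\mathbf G=\Aut(\sS_u)^\circ$, need not come from $\Q_p$-points of the factors. If it did, your argument would make all of $\mathbf{G}(\Q_p)$ a product of unipotent groups. That fails already for $\sS_u=\mathfrak{sl}_2(\Q_p)$: there $\mathbf G=\mathrm{PGL}_2$, $S^+$ is the image of $\mathrm{SL}_2(\Q_p)$, and the determinant modulo squares separates $S^+$ from $\mathrm{PGL}_2(\Q_p)$.

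\textbf{The Bruhat argument.} It handles the Weyl representatives but not the torus part. The decomposition reads $S^+=\bigsqcup_w U^+(\mathbf{Z}(\Q_p)\cap S^+)n_wU^+$, where $\mathbf{Z}$ is the centralizer of a maximal $\Q_p$-split torus and contains the anisotropic kernel. You give no reason why $\mathbf{Z}(\Q_p)\cap S^+$ is a bounded product of elements of $U^{\pm}$. For non-split groups this needs the rank-one structure, and it is where the real content lies.

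\textbf{Zilber indecomposability.} There is no such theorem for groups definable in $\Q_p$, so that ``standard fact'' is not available. In general a definable group generated by a definable set need not be a bounded power of it. For example, $\Q_p^*$ is generated by $\Z_p^*\cup\{p,p^{-1}\}$, yet the powers of this set grow forever.

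\textbf{What the paper does instead.} It never writes down an explicit factorization.
\begin{enumerate}
\item The finitely many $V_{j,k}$ generate an open subgroup of each simple factor $S_j^+$, by the inverse function theorem.
\item That subgroup is unbounded, so it equals $S_j^+$ by \cite{PYZ} Corollary 3.15.
\item $S^+$ is definable. In a saturated extension $\M$ the same argument gives $S^+(\M)=\bigcup_n Y(\M)^n$.
\item Saturation forces this increasing union of definable sets to stabilize, so $S^+=Y^N$ for some $N$, and this transfers back to $\Q_p$.
\end{enumerate}
The $\Q_p^*$ example shows why generation must be verified in $\M$ and not merely in $\Q_p$.

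To repair your proof you need either this compactness argument or a genuine proof that $\mathbf{Z}(\Q_p)\cap S^+$ is boundedly generated by unipotent elements.
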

\begin{proof}
By \cite{paper} Proposition 5.5, the adjoint representation induces a short exact sequence of
topological groups $1\rightarrow F\rightarrow S_u\stackrel {\ell}{\rightarrow} S^+\rightarrow 1$ where $S^+=\Aut(\sS_u)_u$ and $F$ is finite and central.

We write $\sS_u=\sS_1\oplus\dots\oplus \sS_m$ as a direct sum of its simple ideals.
Note that $\sS_u$ is totally isotropic, i.e., is spanned by nilpotent elements. Then each $\sS_j$ admits a basis $X_{j,1},\dots ,X_{j,d_j}$ consisting of nilpotent elements. For each $j,k$, we let 
$$\psi_{j,k}:(\Q_p,+)\rightarrow S^+: t\mapsto \exp(t \text{ } \ad(X_{j,k}))$$
and $V_{j,k}:=\psi_{j,k}(\Q_p)$. 

For each simple ideal $\mathfrak s_j$, let
  $\Ss_j=\Aut(\mathfrak {s}_j)^\circ$, where $\Aut(\sS_j)$ is treated as an algebraic group. 
  Then $\Ss_j$ is a $\Q_p$-isotropic almost $\Q_p$-simple algebraic group. Let $S_j^+=\mathbf S_j(\mathbb Q_p)^+$ be the Kneser–Tits subgroup. 
  Although $\Aut(\sS_u)$ may permute isomorphic simple ideals, every one-parameter subgroup acts trivially on this finite permutation quotient. Hence, the subgroup generated by one-parameter subgroups preserves every simple ideal and we have
  $S^+
  \cong \prod_{j=1}^m S_j^+$. 

 Let $D_j:=\langle V_{j,k}:1\leq k\leq d_j\rangle$. Then $D_j$ is a subgroup of $S_j^+$. Consider the analytic map $m_j: \Q_p^{d_j}\rightarrow S_j^+:(t_1,\dots,t_{d_j})\rightarrow \prod_{k=1}^{d_j}\psi_{j,k}(t_k)$. Then the derivative at zero $(d m_j)_0$ is $\sum_{k=1}^{d_j}t_k \ad(X_{j,k})$. Since $S_j^+$ is open in $\mathbf {S}_j(\mathbb Q_p)$, $\Lie(S_j^+)=\Lie(\Ss_j(\Q_p))$. Since $\Ss_j=\Aut(\sS_j)^\circ$, we have $\Lie(\Ss_j(\Q_p))=\mathrm{Der}(\sS_j)$.  As $\mathfrak s_j$ is semisimple, its center is trivial and every derivation of $\mathfrak s_j$ is inner. Hence,
$\ad:\sS_j \rightarrow\mathrm{Der}(\sS_j)=\Lie(\Ss_j(\Q_p))=\Lie(S_j^+)$
is an isomorphism. Then it is easy to check that the linear map $$(dm_j)_0:\Q_p^{d_j}\rightarrow \Lie (S_j^+):(t_1,\dots, t_{d_j})\mapsto \sum_{k=1}^{d_j}t_k \ad(X_{j,k})$$ is an isomorphism. Hence, by the $p$-adic inverse function theorem, the product $V_{j,1}\cdots V_{j,d_j}$  contains an open neighborhood of the identity in $S_j^+$, and $D_j$ is an open subgroup of $S_j^+$. 

Since $D_j$ contains a nontrivial unipotent one-parameter subgroup, it is unbounded in $S_j^+$. By 
\cite{PYZ} Corollary 3.15, every open subgroup of a $\Q_p$-isotropic almost $\Q_p$-simple $p$-adic algebraic group is either bounded or contains the Kneser–Tits
subgroup. Hence, $D_j=S_j^+$. 

Let $Y=\bigcup_{j=1}^m\bigcup_{k=1}^{d_j}V_{j,k}$.
Obviously, $Y$ is definable. Let $\M$ be a very saturated extension of $\Q_p$. The preceding argument applies over $\M$. Hence, by \cite{PYZ} Corollary 3.15, we have that $S^+$ is definable and  $S^+(\M)=\langle Y(\M)\rangle=\bigcup_{n\geq 1} Y(\M)^n$. By the saturation of $\M$, the union $S^+(\M)=\bigcup_{n\geq 1} Y(\M)^n$ is, in fact, a finite union, so there is $N\geq 1$ such that $S^+(\M)=Y(\M)^N$. Then, by elementarity, we have $S^+=Y^N$ in $\Q_p$. 

We now lift the finitely many one-parameter subgroups. By \cite{paper} Lemma 5.9, every morphism
$\psi_{j,k}$ lifts through the finite central extension $1\rightarrow F\rightarrow S_u\stackrel {\ell}{\rightarrow} S^+\rightarrow 1$. Namely, there is a one-parameter morphism $\phi_{j,k}:(\Q_p,+)\rightarrow S_u$ such that $\ell\circ\phi_{j,k}=\psi_{j,k}$. 
Let $U_{j,k}=\phi_{j,k}(\Q_p)$. By Proposition \ref{prop-def-one}, $U_{j,k}$ is definable. Let $X=\bigcup_{j=1}^m\bigcup_{k=1}^{d_j} U_{j,k}$. Of course, $X$ is definable. 

Since $\ell(X)=Y$ and $S^+=Y^N$, we have $S_u=FX^N$.  Hence, $S_u$ is definable.
\end{proof}

\begin{coro}\label{coro-finite-prod}
Let $G$ be definable. Then $G_u$ is definable. Moreover, $G_u$ is a finite product of one-parameter subgroups.
\end{coro}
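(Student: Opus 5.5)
We will combine Propositions \ref{def-Ru} and \ref{def-Su} with the Levi decomposition $G_u=R_u\rtimes S_u$ of \cite{paper} Proposition 5.5.

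\emph{Definability.} By Theorem \ref{main-1}, $G$ is weakly regular, so the decomposition applies. By Proposition \ref{def-Ru} the subgroup $R_u$ is definable, and by Proposition \ref{def-Su} so is $S_u$. Since $G_u=R_uS_u$, it is the image of the definable set $R_u\times S_u$ under the multiplication map $(r,s)\mapsto rs$. Hence $G_u$ is definable.

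\emph{Finite product.} For $R_u$, the proof of Proposition \ref{def-Ru} already writes it as $\exp(\Q_p X_1)\cdots\exp(\Q_p X_m)$, a product of $m$ one-parameter subgroups. For $S_u$, the proof of Proposition \ref{def-Su} gives $S_u=FX^N$ with $X=\bigcup_{j,k}U_{j,k}$, and each $U_{j,k}$ is a one-parameter subgroup. Two points need care.

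\begin{enumerate}
\item \emph{Removing the union.} An element of $X^N$ is a product $u_1\cdots u_N$ with each $u_i$ in some $U_{j_i,k_i}$. There are only finitely many index words $w=((j_1,k_1),\dots,(j_N,k_N))$. If $(a_1,\dots,a_r)$ enumerates the finitely many index pairs and we set $\Pi=(U_{a_1}\cdots U_{a_r})^N$, then every such word is a subword of the concatenation. Inserting the identity for the unused factors shows $X^N\subseteq\Pi$. Conversely $\Pi\subseteq S_u$, since each $U_{a_i}\leq S_u$.
\item \emph{Removing the finite kernel $F$.} This is the main obstacle. I would show that $S_u=X^{N'}$ for some $N'$, i.e.\ that $F\subseteq\langle X\rangle$ is reached in boundedly many steps. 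The key claim is that $\langle X\rangle$ is open in $S_u$, because the $d\phi_{j,k}$ span $\sS_u$ and the inverse function theorem applies. Then $\langle X\rangle$ has finite index in $S_u$, since it contains an open subgroup and $\ell(\langle X\rangle)=S^+$. Moreover $\langle X\rangle$ is generated by one-parameter subgroups, so it should equal $S_u$. To see this, I would argue that every one-parameter subgroup tangent to $\sS$ lies in $\langle X\rangle$: its image modulo the finite-index normal subgroup $\langle X\rangle$ is a divisible subgroup of a finite quotient, hence trivial. Normality of $\langle X\rangle$ holds because it contains $[S_u,S_u]$ up to finite data, or more simply because $\ell(\langle X\rangle)=S^+$ and $F$ is central, so $S_u=F\langle X\rangle$ and conjugation by $F$ is trivial.
\end{enumerate}

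Granting $S_u=\langle X\rangle$, the saturation and compactness argument used for $S^+$ in Proposition \ref{def-Su} applies verbatim to $S_u$ and $X$. It yields $S_u=X^{N'}$, and step 1 converts this into a product of finitely many one-parameter subgroups.

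Finally, $G_u=R_uS_u$. Concatenating the two finite lists expresses $G_u$ as a finite product of one-parameter subgroups.

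If the saturation step is delicate, a fallback is available. The image $\ell(U_{a_1}\cdots U_{a_r})$ contains an identity neighborhood of $S^+$, so $U_{a_1}\cdots U_{a_r}$ contains an open neighborhood $W$ of $1$ in $S_u$. Each element of the finite group $F$ lies in $\langle X\rangle$, hence in some bounded power of $X$. Combining these, finitely many extra blocks $U_{a_1}\cdots U_{a_r}$ absorb $F$.
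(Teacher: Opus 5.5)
Your argument is correct, and the definability half and your step 1 match the paper; the difference lies in how you absorb the finite kernel $F$.

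\emph{The paper's route.} The paper treats this as immediate. Since $F\le S_u$ and $S_u$ is by definition generated by one-parameter subgroups, each of the finitely many elements of $F$ is a finite product of elements of one-parameter subgroups. Hence $F\subseteq W_1\cdots W_M$ for some one-parameter subgroups $W_i\le S_u$, not necessarily among the $U_{j,k}$. Then $S_u=FX^N\subseteq W_1\cdots W_M\,(U_{a_1}\cdots U_{a_r})^N\subseteq S_u$.

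\emph{Your route.} You instead prove the stronger fact $S_u=\langle X\rangle$, via $S_u=F\langle X\rangle$, normality from centrality of $F$, and divisibility. This shows that the finitely many chosen lifts already generate $S_u$. So $S_u$ is a bounded power of the explicit block $U_{a_1}\cdots U_{a_r}$, a slightly sharper conclusion obtained at the cost of extra work.

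\emph{What is superfluous.} Two ingredients of your step 2 are not needed. First, openness of $\langle X\rangle$ plays no role, since finite index already follows from $S_u=F\langle X\rangle$. Second, the saturation argument is unnecessary. Because $F$ is finite and $F\subseteq\langle X\rangle=\bigcup_n X^n$ with the $X^n$ increasing (as $1\in X$), you get $F\subseteq X^{n_0}$ for some $n_0$. Hence $S_u=FX^N=X^{n_0+N}$ directly; this is your fallback, without the neighborhood $W$.

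The saturation step would also not be ``verbatim''. Your proof of $S_u=\langle X\rangle$ speaks of one-parameter subgroups, which have no meaning over $\M$. You would have to go through $S_u(\M)=FX(\M)^N$ and $F\subseteq\langle X\rangle$ anyway, at which point compactness adds nothing.
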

\begin{proof}
Note that $G_u=R_u\rtimes S_u$. Since $R_u$, $S_u$ and the group multiplication are definable, $G_u$ is definable. 

Moreover, from the proof of Proposition \ref{def-Ru}, $R_u$ is a product of finitely many one-parameter subgroups. From the proof of Proposition \ref{def-Su}, $S_u$ is a product of finitely many one-parameter subgroups and a finite group $F$. Obviously, the finite group $F\leq S_u\leq G_u$ is contained in a finite product of one-parameter subgroups. Thus, $G_u=R_u\rtimes S_u$ is also a product of finitely many one-parameter subgroups. 
\end{proof}

As a corollary, we give the following result about $\Psi_G$, which is mentioned in the introduction.
Recall that $\Hh_0$ is an algebraic group and $\Hh_0=\Rr_0\rtimes \Ss_0$ where $\Rr_0(\Q_p)=R_u$ and $\Ss_0$ is the Zariski closure of $\Ad(S_u)$. The morphism $\Psi_G:G_u=R_u\rtimes S_u\rightarrow \Hh_0(\Q_p):(r,s)\mapsto (r,\Ad(s))$ has finite kernel and finite-index image. It is immediate from the definability of $G_u$, $R_u$, $S_u$, and the adjoint map $S_u\rightarrow \Ss_0(\Q_p): s\mapsto \Ad(s)$ that

\begin{coro}\label{coro-alge}
The morphism $\Psi_G$ is definable. 
\end{coro}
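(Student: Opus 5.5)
The statement to prove is Corollary \ref{coro-alge}: the morphism $\Psi_G$ is definable. The plan is to show that the graph of $\Psi_G$ is a definable set. Since $\Psi_G(r,s)=(r,\Ad(s))$, this amounts to checking that each ingredient of the formula is definable. I would carry it out in three steps.

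\textbf{Step 1: the domain and its coordinates.} By Corollary \ref{coro-finite-prod}, $G_u$ is definable, and by Propositions \ref{def-Ru} and \ref{def-Su}, so are $R_u$ and $S_u$. Every $g\in G_u$ has a unique decomposition $g=rs$ with $r\in R_u$ and $s\in S_u$, because $R_u\cap S_u=\{1_G\}$. The coordinate maps $g\mapsto r$ and $g\mapsto s$ are therefore definable: their graphs are $\{(g,r,s): r\in R_u,\ s\in S_u,\ g=rs\}$.

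\textbf{Step 2: the $S_u$-component.} The adjoint map $\Ad_G:G\to\GL(\gG)$ is definable, since $G$ carries its canonical definable Lie structure (\cite{P} Lemma 3.8). Hence its restriction to the definable set $S_u$ is definable. Here I would use the identification, implicit in \cite{paper}, of $\Ss_0$ with the Zariski closure of $\Ad(S_u)$ inside $\GL(\gG)$, or inside $\Aut(\sS_u)$ after restricting to the invariant subalgebra $\sS_u$. Restriction to the $\Ad(S_u)$-invariant subspace $\sS_u$ is a polynomial map, so it is definable. The target $\Ss_0(\Q_p)$ is the set of $\Q_p$-points of an algebraic group, hence definable in the pure field.

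\textbf{Step 3: the $R_u$-component and assembly.} The identification $R_u=\Rr_0(\Q_p)$ is given by the exponential/logarithm coordinates of \cite{paper} Lemma 3.2. It is polynomial in the coordinates $t_1,\dots,t_m$ of the product decomposition $R_u=\exp(\Q_p X_1)\cdots\exp(\Q_p X_m)$ used in Proposition \ref{def-Ru}. This is the one point I expect to need care, because the proof of Proposition \ref{prop-def-one} only makes the images of one-parameter subgroups definable, not the parametrizations $\phi$. To handle it, I would realise the isomorphism $R_u\cong\Rr_0(\Q_p)$ definably by working inside $G$ itself. The group structure of $R_u$ as a definable group, together with the polynomial group law of $\Rr_0$ transported by the definable composition of the maps in Step 1, makes the identification definable. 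Alternatively, one can simply take $\Rr_0(\Q_p)$ to be $R_u$ with this definable group structure, since $\Hh_0$ is only specified up to this identification. With both components definable, the semidirect-product law on $\Hh_0(\Q_p)$ is polynomial, and the graph of $\Psi_G$ is a Boolean combination and projection of definable sets. Hence $\Psi_G$ is definable.
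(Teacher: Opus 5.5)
Your Steps 1 and 2, together with the ``alternatively'' clause of Step 3, are exactly the paper's proof. The paper declares the corollary immediate from the definability of $G_u$, $R_u$, $S_u$ and of $s\mapsto\Ad(s)$, and reads the $R_u$-coordinate of $\Psi_G$ as the identity under $R_u=\Rr_0(\Q_p)$. You are right that this identification is the one delicate point, because a continuous isomorphism between definable groups need not be definable (for odd $p$, $\log:1+p\Z_p\to p\Z_p$ is not definable in the pure field). The paper passes over it tacitly, just as your ``alternatively'' does.

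Your main fix, however, is circular. The maps of Step 1 are the projections $G_u\to R_u$ and $G_u\to S_u$; they give no coordinates on $R_u$. ``Transporting the polynomial group law of $\Rr_0$'' to $R_u$ presupposes the isomorphism $\iota:R_u\to\Rr_0(\Q_p)$ whose definability is in question.

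The missing idea is that your premise is too pessimistic: the parametrizations themselves are definable. In the proof of Proposition \ref{prop-def-one}, $f|_U:U\to L$ is a definable isomorphism, continuous because it is a definable homomorphism. Also $t\mapsto\exp(tX)$ is a definable isomorphism $\Q_p\to L$ with polynomial inverse. Composing gives a definable continuous isomorphism $\beta:U\to\Q_p$. Then $\beta\circ\phi$ is a continuous additive map $\Q_p\to\Q_p$, hence of the form $t\mapsto at$, so $\phi(t)=\beta^{-1}(at)$ is definable.

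Now take $\phi_i$ with derivative $X_i$, where $X_1,\dots,X_m$ is the basis from Proposition \ref{def-Ru}. Set $c(t_1,\dots,t_m)=\phi_1(t_1)\cdots\phi_m(t_m)$, a definable map $\Q_p^m\to R_u$. By uniqueness of linear one-parameter morphisms, $\iota(\phi_i(t))=\exp(t\,d\iota(X_i))$. So $\iota\circ c$ is the polynomial coordinates-of-the-second-kind bijection $\Q_p^m\to\Rr_0(\Q_p)$. Hence $c$ is bijective and $\iota=(\iota\circ c)\circ c^{-1}$ is definable.

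With $\iota$ in place of the identity, the graph $\{(rs,\iota(r),\Ad(s)):r\in R_u,\ s\in S_u\}$ of $\Psi_G$ is definable, and your assembly in Step 3 goes through.
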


We now deal with the regular open subgroup $G_\Omega$. Note that every compact open subgroup of a definable group is definable (\cite{OP} Remark 1.5). Hence, the sufficiently small standard open subgroup $\Omega$ can be chosen to be definable. Then we immediately have

\begin{coro}
The group $G_\Omega=\Omega G_u$ is definable.
\end{coro}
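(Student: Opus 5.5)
The statement is that $G_\Omega=\Omega G_u$ is definable, so the plan is to reduce it to two facts already available: $\Omega$ is definable, and $G_u$ is definable.

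First I fix the choice of $\Omega$. By \cite{paper} (Proposition 5.5 and Theorem 5.12), any sufficiently small standard compact open subgroup $\Omega$ makes $\Omega G_u$ an open regular subgroup containing all one-parameter subgroups. By \cite{OP} Remark 1.5, every compact open subgroup of a definable group is definable. So whichever sufficiently small standard $\Omega$ we take is automatically definable, and no extra choice is needed.

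Next, Corollary \ref{coro-finite-prod} gives that $G_u$ is definable. The set
$$\Omega G_u=\{x\in G:\exists a\in\Omega\,\exists b\in G_u\ (x=ab)\}$$
is then defined by a first-order formula from the definable sets $\Omega$ and $G_u$ and the definable group multiplication on $G$. Hence $G_\Omega$ is definable. That $G_\Omega$ is a regular open subgroup comes from \cite{paper}; definability does not affect it. If one wants to see directly that the product set is a subgroup, note that $G_u$ is normal in $G$, since conjugation permutes one-parameter subgroups.

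There is no real obstacle here. The only point to check is that the smallness condition of \cite{paper} is compatible with definability, and it is, because \cite{OP} makes every compact open subgroup definable.
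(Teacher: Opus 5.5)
Your proposal is correct and follows essentially the paper's argument. $\Omega$ is definable because every compact open subgroup of a definable group is definable (\cite{OP} Remark 1.5), $G_u$ is definable by Corollary \ref{coro-finite-prod}, and $\Omega G_u$ is therefore defined by a first-order formula using the definable multiplication. Your remark that $G_u$ is normal, which makes $\Omega G_u$ a subgroup, is a harmless addition; the paper leaves this implicit in its appeal to \cite{paper}.
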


In summary, we have proved Theorem \ref{main-2}.

\section{Recovery $G_u$ by dfg components}

We first study dfg groups. Let $H$ be a dfg group. Recall that by  \cite{PYZ}  Theorem 5.6, there are a connected $\Q_p$-split solvable algebraic group $\Hh$, a finite-index definable
subgroup $H'\leq H$,  and a definable
homomorphism $f: H'\rightarrow \Hh(\Q_p)$ with finite kernel and finite-index image. Moreover, $\Hh=\Uu\rtimes\Tt$ where $\Uu$ is the unipotent radical of $\Hh$ and $\Tt$ is a $\Q_p$-split torus.  
Then we have 

\begin{prop}
Every one-parameter subgroup of $H$ lies in $H'$ and the restriction $f|_{H_u}:H_u\rightarrow \Uu(\Q_p)$ is a definable Lie-group isomorphism.
\end{prop}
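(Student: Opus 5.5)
First, one-parameter subgroups lie in $H'$. Let $U=\phi(\Q_p)$ be a one-parameter subgroup of $H$. Since $H'$ has finite index in $H$, the normal core $N=\bigcap_{h\in H}(H')^h$ is a finite-index normal subgroup of $H$, and $N\leq H'$. Then $\pi\circ\phi:(\Q_p,+)\to H/N$ is a homomorphism into a finite group. Because $(\Q_p,+)$ is divisible, this homomorphism is trivial, so $U\leq N\leq H'$. In particular $H_u\leq H'$, and $H_u=H'_u$.

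Next I show that $f$ maps $H_u$ into $\Uu(\Q_p)$ and is injective there. Take a nontrivial one-parameter subgroup $U$. Then $f\circ\phi$ is a continuous homomorphism $(\Q_p,+)\to\Hh(\Q_p)\leq\GL_n(\Q_p)$, so by \cite{paper} Lemma 2.2 it has the form $t\mapsto\exp(tX)$ with $X$ nilpotent. Its image consists of unipotent elements of the solvable group $\Hh$. In $\Hh=\Uu\rtimes\Tt$ the unipotent elements are exactly those of $\Uu$, since the image in the torus $\Tt$ of a unipotent element is both unipotent and semisimple, hence trivial. So $f(U)\leq\Uu(\Q_p)$, and therefore $f(H_u)\leq\Uu(\Q_p)$.

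For injectivity, note that $\ker f\cap H_u$ is a finite normal subgroup of $H_u$. By Corollary \ref{coro-finite-prod} and the Levi decomposition, $H_u=R_u\rtimes S_u$. Since $\Lie(H)$ is solvable (its image under $df$ is $\Lie(\Hh)$), the Levi part $\sS_u$ vanishes, so $H_u=R_u$. By \cite{paper} Lemma 3.2, $R_u$ is the $\Q_p$-points of a unipotent group, hence torsion-free. Thus $f|_{H_u}$ is injective.

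The main obstacle is surjectivity onto $\Uu(\Q_p)$. The group $\Uu$ is a split unipotent group, so $\Uu(\Q_p)=\exp(\Q_p Y_1)\cdots\exp(\Q_p Y_k)$ for a basis $Y_1,\dots,Y_k$ of $\Lie(\Uu)$. Since $f$ has finite kernel, $df$ is an isomorphism $\Lie(H')\to\Lie(\Hh)$. I would lift each $t\mapsto\exp(tY_i)$ to a one-parameter morphism of $H'$, in the following steps.
\begin{enumerate}
\item $f$ has finite-index image, so $D_i=f^{-1}(\exp(\Q_pY_i))$ is definable and, with $F=\ker f$ and $m$ an exponent of $F$, $D_i$ is a finite extension of $(\Q_p,+)$. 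Here I use that the image of $f$ is open of finite index, so it meets $\exp(\Q_pY_i)$ in a finite-index, hence (by divisibility) full, subgroup.
\item As in the proof of Proposition \ref{prop-def-one}, the subgroup $D_i^m$ is divisible, and, with more care, $f$ maps it onto $\exp(\Q_pY_i)$.
\item Equivalently, use the lifting of one-parameter morphisms through finite central extensions in \cite{paper} Lemma 5.9, after passing to the centralizer of the relevant finite group.
\end{enumerate}
This yields one-parameter subgroups $U_i\leq H_u$ with $f(U_i)=\exp(\Q_pY_i)$, so $f(H_u)=\Uu(\Q_p)$.

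Finally, $f|_{H_u}$ is a definable, continuous, bijective homomorphism between $p$-adic Lie groups. Definability of $H_u$ comes from Theorem \ref{main-2}. A bijective continuous homomorphism between $\sigma$-compact locally compact groups is open, so $f|_{H_u}$ is a Lie-group isomorphism.
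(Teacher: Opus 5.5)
Your proposal is correct and follows essentially the paper's route. Divisibility gives $U\leq H'$, and unipotence of linear one-parameter groups gives $f(H_u)\leq \Uu(\Q_p)$. For surjectivity you lift one-parameter morphisms through the finite central extension $Z_{H'}(\ker f)\to f(Z_{H'}(\ker f))$ via \cite{paper} Lemma 5.9 (your item 3, exactly the paper's step), and injectivity comes from $H_u=R_u$ being algebraic unipotent, hence torsion-free. Do not justify your item 2 by citing Proposition \ref{prop-def-one}, because $D_i$ need not be commutative there (one would first need $D_i=F\cdot Z_{D_i}(F)$ with $Z_{D_i}(F)$ abelian). So rest the argument on item 3. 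Your closing open-mapping argument is a useful addition: it makes explicit the topological part of the isomorphism, which the paper leaves implicit.
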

\begin{proof}
Since $(\Q_p,+)$ is divisible and $H'$ is of finite index in $H$, every one-parameter subgroup of $H$ lies in $H'$ and $H_u=H'_u$. Let $F=\ker (f)$ and $K=Z_{H'}(F)$. Then we have a finite central extension $1\rightarrow F_0\rightarrow K\rightarrow f(K)\rightarrow 1$ where $F_0=F\cap K=Z(F)$. The conjugation action of $H'$ on $F$ induces an embedding  $H'/K\hookrightarrow \Aut(F)$, so $K$ has finite index in $H'$. Then every one-parameter subgroup of $H'$ lies in $K$, so $K_u=H'_u=H_u$. 

Recall that a linear one-parameter group is unipotent (\cite{paper} Lemma 2.2), so $f(H_u)\leq \Uu(\Q_p)$.  Since $f(K)$
has finite index in $\Hh(\Q_p)$, the subgroup $f(K)\cap \Uu(\Q_p)$ has finite index in $\Uu(\Q_p)$. An algebraic
unipotent $p$-adic group has no proper finite-index subgroup, so $\Uu(\Q_p)\leq f(K)$.

Let $\psi:(\Q_p,+)\rightarrow \Uu(\Q_p)$ be a one-parameter morphism. By  \cite{paper} Lemma 5.9, $\psi$ lifts a one-parameter morphism $\phi:(\Q_p,+)\rightarrow K$. Since $\Uu(\Q_p)$ is generated by its one-parameter subgroups, we have $f(H_u)=\Uu(\Q_p)$.

Note that the Lie algebra of $H$ is solvable, so the semisimple part of $H_u$ is trivial and $H_u$ is  algebraic unipotent.  In particular, $H_u$ is torsion-free. Hence, $H_u\cap F$ is trivial and $f|_{H_u}$ is an isomorphism.
\end{proof}

Now we let $G$ be definable and recover $G_u$ by its dfg components. Note that the dfg components mentioned are all assumed to be definable over $\Q_p$ and defined in $\Q_p$. And for every two dfg component $H_1$ and $H_2$, there is $g\in G$ such that $H_1^g\cap H_2$ has finite index in both $H_1^g$ and $H_2$ (\cite{PYZ} Theorem B). Assume that $H$ is a dfg component of $G$. 
It is obvious that, for $g\in G$, $(H_u)^g=(H^g)_u$, so we just write $H_u^g$.
Then the subgroup $\langle H^g_u:g\in G\rangle$ of $G$ is precisely the subgroup generated by all one-parameter cores of dfg components.

\begin{prop}\label{prop-Gu-Hu}
$G_u=\langle H^g_u:g\in G\rangle$.
\end{prop}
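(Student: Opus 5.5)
The inclusion $\supseteq$ is immediate. Every one-parameter subgroup of $H^g$ is a one-parameter subgroup of $G$, so each $H^g_u\leq G_u$. All the work is in $\subseteq$. By Corollary \ref{coro-finite-prod}, $G_u$ is generated by its one-parameter subgroups. It therefore suffices to show that every one-parameter subgroup $U=\phi(\Q_p)$ of $G$ lies in $H^g_u$ for some $g\in G$.

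Fix a nontrivial $U$. By Proposition \ref{prop-def-one}, $U$ is a definable subgroup of $G$ with dfg, and it is definably isomorphic to $(\Q_p,+)$ via a linear unipotent one-parameter group. I then want to use the maximality theory of dfg components from \cite{PYZ}. The plan is to show that every definable dfg subgroup of $G$ is contained, up to finite index, in some dfg component. The natural route goes through Theorem B of \cite{PYZ}: dfg components are the maximal definable dfg subgroups, and they are unique up to conjugacy and commensurability. If the paper only states maximality for dfg components in the commensurability sense, I would argue as follows. Let $H_1$ be a dfg component containing a finite-index subgroup $U_0$ of $U$. Since $U\cong(\Q_p,+)$ is divisible, it has no proper finite-index subgroups, so $U_0=U$ and $U\leq H_1$. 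By Theorem B there is $g$ with $H^g\cap H_1$ of finite index in $H_1$. Divisibility of $U$ again forces $U\leq H^g\cap H_1\leq H^g$.

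Once $U\leq H^g$, $U$ is the image of a one-parameter morphism into $H^g$. Hence $U\leq (H^g)_u=H_u^g$ by definition of the one-parameter core. This gives $G_u\leq\langle H^g_u:g\in G\rangle$.

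The main obstacle is the step placing $U$ inside some dfg component. This needs a statement from \cite{PYZ} that every definable dfg subgroup is contained in (or virtually contained in) a maximal one. If only existence of dfg components as maximal dfg subgroups of a given dimension is available, I would first try this: take a dfg subgroup $D\geq U$ of maximal dimension among definable dfg subgroups containing $U$. Then I would check that $D$ is a dfg component, by comparing dimensions with a dfg component and using Theorem B of \cite{PYZ} to pass between them.
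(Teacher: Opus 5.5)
Your reduction is fine: it suffices to put each nontrivial one-parameter subgroup $U$ inside a conjugate of $H$. Your final step is also correct: once $U$ lies in some dfg component $H_1$, Theorem B and the divisibility of $U$ push $U$ into a conjugate of $H$.

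\textbf{The gap.} The gap is exactly the step you flag as the main obstacle. Nothing you cite shows that the dfg subgroup $U$ is contained, even up to finite index, in a dfg component. Theorem B of \cite{PYZ}, as used here, only compares dfg components with one another. Knowing that components are maximal and conjugate up to commensurability says nothing about whether an arbitrary dfg subgroup sits inside one.

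Your fallback does not close this. Take $D\geq U$ of maximal dimension among definable dfg subgroups containing $U$. Theorem B and dimension counting cannot rule out $\dim D<\dim H$. To do so you would already need $D$ to be virtually contained in a conjugate of $H$, which is what you are trying to prove. Compare algebraic groups: that every connected solvable subgroup lies in a Borel subgroup does not follow formally from conjugacy of Borels; it needs the Borel fixed point theorem. Unless \cite{PYZ} states the containment outright, this step is unproven. Any such statement would itself be proved by the argument below.

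\textbf{The missing ingredient.} You need that fixed-point argument, and it is how the paper proceeds.
\begin{enumerate}
\item By \cite{PYZ} Theorem 8.17, the homogeneous space $G/H$ is compact.
\item By \cite{PYZ} Proposition 6.3, a definable dfg group acting definably and continuously on such a space has a finite orbit.
\item Apply this to $U$, which has dfg by Proposition \ref{prop-def-one}, acting on $G/H$ by left translation. This gives a coset $gH$ whose stabilizer $U\cap gHg^{-1}$ has finite index in $U$.
\item Since $U\cong(\Q_p,+)$ is divisible and has no proper finite-index subgroup, $U\leq gHg^{-1}$. Hence $U$ lies in the one-parameter core of that conjugate of $H$.
\end{enumerate}
This route needs no general result about dfg subgroups lying in components, and no appeal to Theorem B.
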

\begin{proof}
By \cite{PYZ} Theorem 8.17, the homogeneous space $G/H$ is compact.
Let $U$ be a one-parameter subgroup of $G$. By Proposition \ref{prop-def-one}, $U$ is definable and has dfg.
The action of $U$ on $G/H$ is definable and continuous. By \cite{PYZ} Proposition 6.3, there is a coset $gH$ with a finite $U$-orbit. Its
stabilizer $U\cap H^g$
has finite index in $U$. 

Since $U\cong (\Q_p,+)$ is divisible and has no proper finite-index subgroup, $U \leq H^g$.
Thus every one-parameter subgroup of $G$ lies in a conjugate of $H$, which gives $G_u\leq \langle H_u^g: g\in G\rangle$.

The reverse inclusion is immediate because every one-parameter subgroup of a conjugate of
$H$ is a one-parameter subgroup of $G$.
\end{proof}

Note that, by Corollary \ref{coro-finite-prod}, $G_u$ is a finite product of one-parameter subgroups. Thus, we have

\begin{coro}
There are $g_1,\dots, g_n\in G$ such that $G_u= H^{g_1}_u\cdots H^{g_n}_u$.
\end{coro}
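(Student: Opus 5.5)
The plan is to combine Corollary \ref{coro-finite-prod} with the key step in the proof of Proposition \ref{prop-Gu-Hu}.

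By Corollary \ref{coro-finite-prod}, there are one-parameter subgroups $U_1,\dots,U_n$ of $G$ with
$$G_u=U_1U_2\cdots U_n.$$
(In the proof of that corollary, the finite group $F$ is absorbed into such a product.) For each $i$, the proof of Proposition \ref{prop-Gu-Hu} gives an element $g_i\in G$ with $U_i\leq H^{g_i}$. This step uses three facts:
\begin{itemize}
\item[(i)] $U_i$ is definable and has dfg (Proposition \ref{prop-def-one});
\item[(ii)] $G/H$ is compact, so the action of $U_i$ on it has a finite orbit;
\item[(iii)] $U_i\cong(\Q_p,+)$ has no proper finite-index subgroup.
\end{itemize}
Since $U_i$ is a one-parameter subgroup of $H^{g_i}$, it lies in $(H^{g_i})_u=H^{g_i}_u$.

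Multiplying these inclusions gives
$$G_u=U_1\cdots U_n\subseteq H^{g_1}_u\cdots H^{g_n}_u.$$
For the reverse inclusion, each $H^{g_i}_u$ is generated by one-parameter subgroups of $H^{g_i}$, which are one-parameter subgroups of $G$. Hence each $H^{g_i}_u\leq G_u$, and since $G_u$ is a subgroup, the product $H^{g_1}_u\cdots H^{g_n}_u$ lies in $G_u$. This gives equality.

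The argument is essentially bookkeeping. The main point to check is that Corollary \ref{coro-finite-prod} really yields a product of one-parameter subgroups, with no leftover factor such as the finite group $F$. That corollary explicitly asserts that $F$ is contained in a finite product of one-parameter subgroups and that $G_u$ is such a product. If one were cautious about $F$, one could instead write $G_u=R_uS_u$ with $S_u=FX^N$ and note that $F\subseteq G_u$ is covered by some finite product of one-parameter subgroups, which again reduces to the displayed form. The finite-orbit step is already established in the proof of Proposition \ref{prop-Gu-Hu}, so no new obstacle is expected.
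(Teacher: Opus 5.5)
Your proposal is correct and follows the paper's proof: write $G_u=U_1\cdots U_n$ using Corollary \ref{coro-finite-prod}, and use the finite-orbit argument from Proposition \ref{prop-Gu-Hu} to place each $U_j$ inside some $H^{g_j}_u$. The chain $G_u\subseteq H^{g_1}_u\cdots H^{g_n}_u\subseteq G_u$ then gives equality. Your extra remark on absorbing $F$ is a sound justification of the step that Corollary \ref{coro-finite-prod} calls obvious.
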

\begin{proof}
Assume that $G_u=U_1\cdots U_n$, a finite product of one-parameter subgroups. Then from the proof of Proposition \ref{prop-Gu-Hu}, for each $1\leq j\leq n$, there is $g_j\in G$ such that $U_j\leq H^{g_j}_u$. Then $G_u= U_1\cdots U_n\subset  H^{g_1}_u\cdots H^{g_n}_u\subset G_u$. Thus, $G_u= H^{g_1}_u\cdots H^{g_n}_u$.
\end{proof}

Note that when $G$ is definably amenable, for every $g\in G$, $H^g\cap H$ has finite index in $H$. Moreover, $H$ can be chosen to be normal (\cite{PYZ} Theorem A). So we immediately have

\begin{coro}
Let $G$ be definably amenable. Then $G_u=H_u$.
\end{coro}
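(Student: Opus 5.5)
The plan is to combine Proposition \ref{prop-Gu-Hu} with the two facts about definably amenable groups just recalled from \cite{PYZ}. By \cite{PYZ} Theorem A, we may take $H$ normal in $G$. Then $H^g=H$ for every $g\in G$, so $H^g_u=H_u$. Proposition \ref{prop-Gu-Hu} then gives
$$G_u=\langle H^g_u:g\in G\rangle=\langle H_u\rangle=H_u,$$
since $H_u$ is already a group.

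The only point needing care is that the statement should hold for an arbitrary dfg component $H$, not only for a normal one. I would first record that $G_u$ does not depend on the choice of $H$: Proposition \ref{prop-Gu-Hu} holds for every dfg component. It therefore suffices to show $H_u=H^g_u$ for every $g$, and then the generated subgroup collapses to $H_u$.

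For this, I would use that in the definably amenable case $H^g\cap H$ has finite index in $H$, and by symmetry (conjugating by $g^{-1}$) also in $H^g$. Every one-parameter subgroup $U$ of $H^g$ is divisible. The intersection $U\cap (H^g\cap H)$ has finite index in $U$, and $U\cong(\Q_p,+)$ has no proper finite-index subgroups, so $U\leq H$. Hence $H^g_u\leq H_u$, and the symmetric argument gives equality.

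Therefore $G_u=\langle H^g_u:g\in G\rangle=H_u$. I expect no real obstacle. The only mild subtlety is the symmetry of the finite-index condition, which follows because conjugation preserves indices: $[H^g:H^g\cap H]=[H:H\cap H^{g^{-1}}]$.
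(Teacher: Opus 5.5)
Your proof is correct and follows the paper's own argument. The paper likewise obtains the corollary from Proposition \ref{prop-Gu-Hu} together with two facts: $H^g\cap H$ has finite index in $H$, and $H$ can be chosen normal (\cite{PYZ} Theorem A). Your divisibility argument, which gives $H^g_u=H_u$ for an arbitrary dfg component, simply writes out what the paper calls ``immediate''. Note that only the inclusion $H^g_u\leq H_u$ for every $g$ is actually needed.
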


We can also give an algebraic version by the definability of $\Psi_G$ (Corollary \ref{coro-alge}). Let $\Uu_g$ be the Zariski closure of $\Psi_G(H^g_u)$ in $\Hh_0$. Since $H_u^g$ is nilpotent, the image  $\Psi_G(H_u^g)$ is nilpotent, and therefore its Zariski closure $\mathbf U_g$ is nilpotent and hence solvable.
Moreover, $\mathbf U_g$ is connected,
since $\Psi_G(H_u^g)$ is generated by one-parameter subgroups, each of which has connected unipotent Zariski closure. By the structure theorem for connected solvable algebraic groups
(Section 2.1.8 \cite{PR}),
the unipotent elements of $\Uu_g$ form a closed normal subgroup $(\Uu_g)_u$. Since all the one-parameter
generators of $\Psi_G(H_u^g)$ lie in $(\Uu_g)_u$, their Zariski density implies
  $\Uu_g=(\Uu_g)_u$.
Hence, $\Uu_g$ is a unipotent algebraic subgroup of $\Hh_0$. Moreover, by Proposition \ref{prop-Gu-Hu} and the Zariski density of $\Psi_G(G_u)$ in
$\mathbf H_0$, $\Hh_0$ is generated by those $\Uu_g$ in the sense of algebraic groups.

\end{document}